\newcommand{\RR}{{\mathbb R}}
\newcommand{\ZZ}{{\mathbb Z}}
\newcommand{\NN}{{\mathbb N}}
\newcommand{\BB}{{\mathcal B}}
\newcommand{\SA}{{\mathcal S}}
\newcommand{\D}{{\Delta}}
\newcommand{\ve}{{\varepsilon}}
\newcommand{\intt}{{\rm int\,}}
\newcommand{\BZ}{{\mathcal B}_0}
\newcommand{\OD}{\overline{D}}
\newtheorem{theorem}{Theorem}
\newtheorem{corollary}{Corollary}
\newtheorem{lemma}{Lemma}
\newtheorem{proposition}{Proposition}
\newtheorem{definition}{Definition}
\newtheorem{problem}{Problem}
\theoremstyle{definition}
\newtheorem{remark}{Remark}
\begin{document}

\title{Kahane's upper density and syndetic sets in LCA groups}

\author{Szil\' ard Gy. R\' ev\' esz\thanks{Supported in part by
the Hungarian National Office for Research, Development and Innovation, Project no. \# K-132097}}

\date{}

\maketitle


\begin{abstract}
Asymptotic uniform upper density, shortened as a.u.u.d., or simply upper density, is a classical notion which was first introduced by Kahane for sequences in the real line.

Syndetic sets were defined by Gottschalk and Hendlund.
For a locally compact group $G$, a set $S\subset G$ is syndetic, if there exists a compact subset $C\Subset G$ such that $SC=G$. Syndetic sets play an important role in various fields of applications of topological groups and semigroups, ergodic theory and number theory. A lemma in the book of Fürstenberg says that once a subset $A \subset \ZZ$ has positive a.u.u.d., then its difference set $A-A$ is syndetic.

The construction of a reasonable notion of a.u.u.d. in general locally compact Abelian groups (LCA groups for short) was not known for long, but in the late 2000's several constructions were worked out to generalize it from the base cases of $\ZZ^d$ and $\RR^d$. With the notion available, several classical results of the Euclidean setting became accessible even in general LCA groups.

Here we work out various versions in a general locally compact Abelian group $G$
of the classical statement that if a set $S\subset G$ has positive asymptotic uniform upper density, then the difference set $S-S$ is syndetic.
\end{abstract}

{\bf MSC 2020 Subject Classification.} Primary 22B05; Secondary
22B99, 05B10.

{\bf Keywords and phrases.} {\it density, asymptotic uniform upper
density, locally compact Abelian group, difference set, syndetic set.}

\bigskip

\section{Introduction}

The notion of syndetic sets was introduced in the fundamental book of Gottschalk and Hedlund \cite{GH}. A subset $S\subset G$ in a topological Abelian
(semi)group is a {\em syndetic} set, if there exists a compact set
$K\subset G$ such that for each element $g\in G$ there exists a
$k\in K$ with $gk\in S$; in other words, in topological groups
$\cup_{k\in K} Sk^{-1}=G$.

Fürstenberg presents as Proposition 3.19 (a) of \cite{Furst} the following.

\begin{proposition}\label{prop:Furst} Let $S\subset
\ZZ$ be a set having positive asymptotic uniform upper density $\overline{D}(S)>0$. Then $S-S$ is syndetic.
\end{proposition}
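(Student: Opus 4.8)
The plan is to argue by contradiction, replacing the delicate bookkeeping of ``good windows'' by a single translation-invariant averaging functional. Write $\delta := \OD(S)>0$. The first step is to record that positive asymptotic uniform upper density produces a finitely additive, translation-invariant probability measure $m$ on $\ell^\infty(\ZZ)$ with $m(\mathbf 1_S)=\delta$. Concretely, choose windows $W_n=[a_n+1,a_n+n]$ realizing the density, i.e. $|S\cap W_n|/n\to\delta$, form the normalized counting functionals $m_n(f)=\frac1n\sum_{x\in W_n}f(x)$, and let $m$ be any weak-$*$ cluster point of the sequence $(m_n)$. Because the $W_n$ are Følner (their boundary is negligible relative to their size), $m$ is invariant under integer translations, it is positive, linear and satisfies $m(\mathbf 1)=1$, and by construction $m(\mathbf 1_S)=\delta>0$. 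The only point requiring care here is matching the paper's precise definition of $\OD$ to the limiting value $m(\mathbf 1_S)$, which is where I expect the main technical friction to lie; everything afterwards is soft.

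Next I would assume, toward a contradiction, that $S-S$ is \emph{not} syndetic. In $\ZZ$ a compact set is exactly a finite set, so non-syndeticity means that for every finite $F\subset\ZZ$ one has $(S-S)+F\neq\ZZ$. This is precisely the hypothesis needed to run a greedy construction: set $t_1:=0$, and having chosen $t_1,\dots,t_n$, note that $(S-S)+\{t_1,\dots,t_n\}$ is a finite union of translates of $S-S$ and hence does not exhaust $\ZZ$; pick any $t_{n+1}$ lying outside it. This yields an infinite sequence $t_1,t_2,\dots$ with $t_{n+1}-t_i\notin S-S$ for all $i\le n$. Since $S-S$ is symmetric and contains $0$, it follows that $t_i-t_j\notin S-S$ for all $i\neq j$, and therefore the translates $S+t_i$ are \emph{pairwise disjoint}, because $(S+t_i)\cap(S+t_j)\neq\emptyset$ is equivalent to $t_i-t_j\in S-S$.

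Finally I would derive the contradiction from the invariant mean. Fix any integer $r>1/\delta$. The supports $S+t_1,\dots,S+t_r$ are pairwise disjoint, so $\sum_{i=1}^{r}\mathbf 1_{S+t_i}\le \mathbf 1_{\ZZ}$ pointwise. Applying the positive, linear, normalized functional $m$ and using its translation invariance, $m(\mathbf 1_{S+t_i})=m(\mathbf 1_S)=\delta$ for every $i$, whence $r\delta=\sum_{i=1}^{r}m(\mathbf 1_{S+t_i})\le m(\mathbf 1_{\ZZ})=1$. Thus $r\le 1/\delta$, contradicting the choice of $r$. Hence $S-S$ must be syndetic. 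I would emphasize that the invariant mean is exactly the device that circumvents the obstruction one meets in a purely window-based pigeonhole argument: there the disjoint translates $S+t_i$ may be supported arbitrarily far apart, so no single window detects all of them simultaneously, whereas the finitely additive invariant $m$ ``sees'' each translate with the same mass $\delta$ regardless of position. This is also the formulation that will transpose cleanly to a general LCA group $G$, with Haar-based Følner averaging and an invariant mean on $L^\infty(G)$ playing the roles above and a compact set $C$ replacing the finite set $F$.
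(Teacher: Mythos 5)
Your proof is correct, but it takes a genuinely different route from the paper's. The paper obtains the proposition from Theorem \ref{th:general-hegyvari}: taking a maximal finite $B$ with $(B-B)\cap(S-S)=\{0\}$, the translates $S+b$, $b\in B$, are pairwise disjoint (the same mechanism as your $S+t_i$), and they are counted inside one well-chosen window rather than by a limit object: plugging $C:=B$ into the inf-sup definition of $\OD$ produces a $V$ with $|S\cap(V+x)|>\tau\,|V+B|$, while disjointness packs $\#B$ copies of $S\cap(V+x)$ into a translate of $V+B$, so $|V+B|\ge \#B\,|S\cap(V+x)|$; hence $\#B\le[1/\OD(S)]$, and maximality directly yields $(S-S)+B=\ZZ$ with that explicit bound. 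You instead count the disjoint translates against a translation-invariant mean $m$ obtained as a weak-$*$ cluster point of F\o lner averages, and argue by contradiction. Your route buys softness and isolates the averaging principle; note, though, that your worry about far-apart translates invisible to any single window is answered differently by the paper: its density definition, with $\mu(C+V)$ in the denominator, is engineered precisely so that the fattened window $V+B$ sees all $\#B$ disjoint copies simultaneously. The costs on your side: the argument is nonconstructive (Banach--Alaoglu on the nonmetrizable ball of $\ell^\infty(\ZZ)^*$, so ``cluster point'' means along a subnet, not a subsequence); you do not state the quantitative bound on the translation set, though saturating your greedy construction recovers $\lfloor 1/\delta\rfloor$; and your closing claim of clean transposition to general LCA groups is optimistic at exactly the point you flagged, since there $\OD$ is defined by $\inf_{C\Subset G}\sup_{V}\nu(V)/\mu(C+V)$ rather than by F\o lner windows, and identifying $m(\mathbf 1_S)$ with $\OD(S)$ would need a bridge such as the paper's Lemma \ref{l:translationlemma}, which witnesses the density on translates of \emph{any} prescribed window, in particular approximately invariant ones. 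On $\ZZ$ with Kahane's classical definition your identification $m(\mathbf 1_S)=\delta$ is sound, since the $\limsup$ is a genuine limit and near-optimal windows of every length exist.
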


Here asymptotic uniform upper density\footnote{It is often, but seemingly erroneously called Banach density, among others also by Fürstenber.} stands for $\overline{D}(S):=\lim_{r\to \infty} \sup_{x \in \ZZ} \frac{[x-r,x+r] \cap S|}{2r}$. Generalization of the notion to $\ZZ^d$ and $\RR^d$, as well as for cases of non-discrete sets $S$ when the numerator becomes the Lebesgue measure instead of the cardinality measure, are straightforward. So is the extension of the proposition to these cases.

However, the notion of syndetic sets is even more general, and found many applications in several areas including dynamical systems, number theory, harmonic analysis. Still, an appropriate generalization of this proposition to general topological groups was not known, because there was no reasonably general and suitable notion of upper density.

In the following we discuss two generalized notions of asymptotic uniform upper density. on arbitrary LCA groups, which appeared only some fifteen years ago. With these generalized notions of Kahane's density, we present various generalized versions of the above result. In this paper we are going to prove generalizations of the above statement which typically read as follows.

\begin{theorem}\label{thm:intorgenth}
Let $G$ be a LCA group and $S\subset G$ a set with positive asymptotic uniform upper density:
$\overline{D}(S)>0$. Then the difference set $S-S$ is a syndetic set.
\end{theorem}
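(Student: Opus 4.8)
The plan is to reduce the statement to a packing (pigeonhole) argument resting on one elementary observation: if two points $x,y\in G$ satisfy $x-y\notin S-S$, then the translates $x+S$ and $y+S$ are disjoint, since a common point $p=x+s=y+s'$ with $s,s'\in S$ would force $x-y=s'-s\in S-S$. Because $S-S$ is symmetric and contains $0$ (as $S$ is nonempty, having positive density), a subset $C\subset G$ has all of its pairwise differences outside $S-S$ exactly when the translates $\{c+S:c\in C\}$ are pairwise disjoint. I shall call such a $C$ \emph{difference-free}.

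The heart of the proof is a \textbf{packing lemma}: every difference-free set $C$ satisfies $|C|\le 1/\overline{D}(S)$. Granting this, the theorem follows quickly. Since difference-free sets have cardinality bounded by the finite quantity $1/\overline{D}(S)$, there is a difference-free set $C=\{c_1,\dots,c_m\}$ of maximal cardinality. Maximality means that for every $g\in G\setminus C$ the set $C\cup\{g\}$ fails to be difference-free, so $g-c_i\in S-S$ for some $i$, i.e. $g\in(S-S)+c_i$; and for $g\in C$ we have $g\in(S-S)+g$ because $0\in S-S$. Hence $G=(S-S)+C$ with $C$ finite, so $C$ is compact and $S-S$ is syndetic.

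It remains to prove the packing lemma, which is the main obstacle and the only place where the precise form of the density enters. Writing $\overline{D}$ in its averaging form via the defining (F{\o}lner) net $(F_n)$, so that $\overline{D}(S)=\lim_n\sup_{x\in G}|S\cap(x+F_n)|/|F_n|$ with $|\cdot|$ the Haar measure, suppose $C=\{c_1,\dots,c_N\}$ is difference-free, put $d=\overline{D}(S)$, and fix $\varepsilon>0$. For all large $n$ there is $x_n$ with $|S\cap(x_n+F_n)|\ge(d-\varepsilon)|F_n|$. By translation invariance of Haar measure, $|(c_i+S)\cap(x_n+F_n)|=|S\cap((x_n-c_i)+F_n)|$, and shifting the window $x_n+F_n$ by the fixed element $c_i$ costs at most $|F_n\,\triangle\,(F_n-c_i)|$ in captured measure, so
\[
  |(c_i+S)\cap(x_n+F_n)| \ \ge\ (d-\varepsilon)|F_n| - |F_n\,\triangle\,(F_n-c_i)|.
\]
The sets $(c_i+S)\cap(x_n+F_n)$ are pairwise disjoint subsets of $x_n+F_n$, whence
\[
  |F_n| \ \ge\ \sum_{i=1}^{N}|(c_i+S)\cap(x_n+F_n)| \ \ge\ N(d-\varepsilon)|F_n| - \sum_{i=1}^{N}|F_n\,\triangle\,(F_n-c_i)|.
\]
Dividing by $|F_n|$ and letting $n\to\infty$, the F{\o}lner property forces each ratio $|F_n\,\triangle\,(F_n-c_i)|/|F_n|\to 0$, giving $1\ge N(d-\varepsilon)$; letting $\varepsilon\to 0$ yields $N\le 1/d$, as required.

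The delicate point is entirely contained in this window-shifting estimate. The uniform (supremum over $x$) character of the density supplies a single averaging window $x_n+F_n$ in which $S$ is simultaneously near-extremal, while the F{\o}lner property ensures that shifting that window by each of the finitely many fixed translations $c_i$ wastes only a negligible fraction of the measure; this is precisely what allows the upper densities of the disjoint translates $c_i+S$—which are \emph{not} additive for general sets—to be summed in one common window. For each of the two concrete constructions of $\overline{D}$ discussed above I would check that it furnishes a net enjoying the translation invariance and asymptotic (F{\o}lner) invariance used here; for a density notion not presented directly through such a net, the same scheme applies once the extremal windows and their shifts are re-expressed in the relevant averaging framework.
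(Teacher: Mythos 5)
Your overall scheme coincides with the paper's: call a finite set ``difference-free'' when its nonzero pairwise differences avoid $S-S$, prove a packing bound on its cardinality via disjointness of the translates $c+S$, take a maximal difference-free set $C$, and read off $G=(S-S)+C$ from maximality (this is exactly the structure of Theorem \ref{th:general-hegyvari} and Corollary \ref{cor:genFurst}). Your covering half is complete. The gap sits in the packing lemma, in the opening clause ``Writing $\overline{D}$ in its averaging form via the defining (F{\o}lner) net $(F_n)$''. The density $\overline{D}$ in the statement is the inf--sup quantity of Definition \ref{def:compactdensity}, $\overline{D}(\nu)=\inf_{C\Subset G}\sup_{V\in\BB_0}\nu(V)/\mu(C+V)$; it is \emph{not} defined through any net of averaging windows, and the whole point of the construction is that a general LCA group has no dilations and, if not $\sigma$-compact, no exhausting F{\o}lner sequence, so there is no canonical $(F_n)$ to invoke. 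Theorem \ref{th:Rudinlemma} only says that for each compact $C$ and $\ve>0$ \emph{some} $V\in\BB_0$ satisfies $\mu(V+C)<(1+\ve)\mu(V)$; it does not organize such sets into a single net along which $\sup_x \mu\bigl(S\cap(x+F_n)\bigr)/\mu(F_n)$ converges to $\overline{D}(S)$. The equivalence of a window-based density with $\overline{D}$ is precisely the nontrivial content that even in $\RR^d$ and $\ZZ^d$ requires Theorem \ref{th:Requivalence}. You flag this as something you ``would check'', but it is the entire difficulty of the generalization, so as written your argument proves the theorem only for a net-defined density, not for the $\overline{D}$ of the statement.

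The repair is short and makes the F{\o}lner detour unnecessary: feed the difference-free set itself into the infimum. Given difference-free $C=\{c_1,\dots,c_N\}$ (finite, hence compact) and $\tau<\overline{D}(S)$, the definition yields some $V\in\BB_0$ with $\mu(S\cap V)>\tau\,\mu(C+V)$, since for \emph{every} compact $C$ the inner supremum is at least $\overline{D}(S)$. The translates $c_i+(S\cap V)$ are pairwise disjoint by your observation and all lie in $C+V$, whence $N\,\tau\,\mu(C+V)<N\,\mu(S\cap V)\le\mu(C+V)$, and cancellation by the positive finite $\mu(C+V)$ gives $N<1/\tau$, hence $N\le[1/\overline{D}(S)]$. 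No symmetric-difference estimate and no limit along a net are needed: the denominator $\mu(C+V)$ in the definition is the built-in substitute for the F{\o}lner property. This is exactly how the paper argues, working with the a priori larger density $\overline{\Delta}$ of Definition \ref{finitedensity} (finite translation sets suffice there), which covers $\overline{D}$ via $\overline{\Delta}\ge\overline{D}$ from Proposition \ref{prop:densitycompari}.
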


In this, everything seems to be quite clear even in the generality of LCA groups -- except for the right definition or construction of the upper density. So, the bulk of the paper will be devoted to sufficiently explain and describe various generalized notions of asymptotic uniform upper density in locally compact Abelian groups.

The structure of the paper is as follows. In Section \ref{sec:classical} we recall the classical notion of Kahane's density, in Section \ref{sec:appearence} we describe how the generalizations occurred, in Section \ref{sec:auudLCA} and \ref{sec:auud-discrete} we explain the generalized density notions. Then in Section \ref{sec:additiveresults} we recall results on difference sets, known to be valid in $\ZZ$ or in some other special cases and including the above Proposition from Fürstenberg's book. These we extend to LCA groups with the new generalized density notions in Sections \ref{sec:additiveproposgeneralized} and \ref{sec:Furst-extension-auud}. On our way we also prove some properties of our density notions, like e.g. subadditivity, which do not seem to be so obvious from their abstract, somewhat tricky definition.

\section{The classical notion of a.u.u.d.}\label{sec:classical}

The notion of asymptotic uniform upper density -- \emph{a.u.u.d. for short} -- of real sequences first appeared in the PhD thesis \cite{KahaneThese} of J.-P. Kahane in 1954, see also \cite{KahaneThAnnIF}. Other early, but definitely later appearances of the notion can be seen in e.g. \cite{Groemer}, \cite{Landau}, \cite{beurling}, \cite{beurlingB}, \cite{Furst}.

\medskip
Although his first construction was different, Kahane immediately shows \cite[Ch. I, \S 3, no. 1, p. 20]{KahaneThese} that the notion can be equivalently defined as follows\footnote{It seems that in the literature almost exclusively this latter equivalent form of the definition is used, although Kahane's original formulation is quite useful.}.
\begin{definition}[Kahane]\label{def:K2} If $S\subset \RR$ is a uniformly discrete sequence, then
\begin{equation}\label{RUNdensity}
\overline{D}^{\#}(S) :=
\limsup_{r\to\infty} \frac{\sup_{x\in\RR}  \# \{s\in S~:~ |s-x|\leq r\}}{2r}.
\end{equation}
\end{definition}
In fact, Kahane uses $\limsup_{|x|\to\infty}$ also in place of $\sup_{x\in\RR}$, but these variants are easily seen to be equivalent.
Also, the $\limsup$ is actually a limit, for the quantity essentially decreases in function of $r$. It is clear that a.u.u.d. is a translation invariant notion.

Kahane used the notion in harmonic analysis, and that remained a major field of applications ever since. Quite fast several related results appeared and the notion proved to be very fruitfully applied in seemingly different questions. A typical area of application is investigation of differences (additive bases, difference sets, packing and tiling, sets avoiding certain prescribed distances etc.), which can easily be understood if we e.g. note that in case $\overline{D}^{\#}(S)>0$, then $S-S$ has positive asymptotic upper density\footnote{This is defined in $\RR^d$ or $\ZZ^d$ as $\limsup_{r\to \infty} \frac{\# \{s\in S~:~ |s||\leq r\}}{2r}$, without taking a sup with respect to the center of the interval.} $\overline{d}^{\#}(S-S)\geq \overline{D}^{\#}(S)$, which is already a quite strong property. Still another area of application occurs in ergodic theory \cite{Furst}.

\medskip
As is already mentioned, in $\RR^d$ or $\ZZ^d$ one can analogously consider, with a fixed
basic set $K\subset \RR^d$ like e.g. the unit ball or unit cube,
\begin{equation}\label{RUNdensity2}
\overline{D}^{\#}_{K}(S) := \limsup_{r\to\infty}
\frac{\sup_{x\in\RR^d} {\rm \#} (S \cap (rK+x))}{|rK|}~.
\end{equation}
Here $K\subset\RR^d$ can be e.g. any fat body, or can even be more general, while for any Lebesgue measurable set $A\subset \RR^d$ $|A|$ stands for the Lebesgue measure of $A$. It is also well-known, that $\overline{D}^{\#}_{K}(S)$ \emph{gives the same value} for all nice sets $K\subset \RR^d$ (although this fact does not seem immediate from the formulation). To prove directly, it requires some tedious $\varepsilon$-covering of the boundary of $K_1$ by homothetic copies of $K_2$ etc. Landau works out this direct proof as \cite[Lemma 4]{Landau} in the generality of compact sets $K\Subset \RR^d$ normalized to have unit measure and satisfying the condition that the boundary $\partial K$ has zero measure.

Actually, here we will obtain this as a side result, being an immediate corollary of our Theorem \ref{th:Requivalence}, see Remark \ref{c:easy}. Moreover, from our approach the same equivalence follows elegantly for arbitrary bounded measurable sets $K$, normalized to have unit measure and satisfying $|\partial K|=0$, i.e. the compactness criterion can be dropped from conditions of Landau.

Also, non-discrete, but locally Lebesgue-measurable sets arise in the context (in problems of plane geometry e.g.), where the natural density is defined by means of volume, not of cardinality. Then a.u.u.d of a Lebesgue-measurable set $A\subset \RR^d$ is defined as
\begin{equation}\label{RUdensity}
\overline{D}_{K}(A) := \limsup_{r\to\infty} \frac{\sup_{x\in\RR^d}
|A\cap (rK+x)|}{|rK|}~.
\end{equation}

That motivates a further extension: we can consider asymptotic uniform upper densities of  \emph{measures}, say some measure $\nu$ 
and not only sequences $S$ or sets $A \subset \RR^d$. So, a general formulation in $\RR^d$ (or $\ZZ^d$) would thus be (writing $|\cdot|$ for the Lebesgue measure in $\RR^d$ or for the cardinality measure in $\ZZ^d$),
\begin{equation}\label{def:Rnu-density}
\overline{D}_{K}(\nu):= \limsup_{r\to\infty} \frac{\sup_{x\in\RR^d}\nu(rK+x)}{|rK|}~,
\end{equation}
To make sense, it is only needed here that the measure $\nu$ is \emph{locally} a finite measure. For simplicity, in this work we will only consider nonnegative measures, so that mentioning measure will be understood as such, always. (Even the very consideration of measures is above the needs of the present work.) It is, however, not an essential restriction that the measure have to be a Borel measure (i.e. the family of measurable sets contain the Borel sigma-algebra) -- we can consider the outer measure $\overline{\nu}$ arising from $\nu$.

However, if we want to have translation invariance of a.u.u.d. (which is a basic requirement towards any reasonable such density notion), then taking $\sup_{x\in \RR^n}$ inside leaves us with not much choice regarding the measure in the denominator: at least asymptotically we need to have it translation invariant, too. This, in turn, more or less determines the measure, too, if we want it to be a Borel measure with finite values on compact sets. In fact, in any locally compact Abelian group, such a translation invariant measure is unique (and thus is called \emph{the} Haar measure) up to a constant factor -- in particular in $\RR^d$ it must be the Lebesgue measure $\lambda$ and in $\ZZ^d$ it must be the counting measure $\#$.

The notion \eqref{def:Rnu-density} of a.u.u.d. indeed remains translation invariant. E.g. in \eqref{RUNdensity} $\nu:={\rm \#}$ is the cardinality or counting measure of a set $S$, while in \eqref{RUdensity} $\nu:=\lambda|_A$ is the trace of $\lambda$ on the measurable set $A\subset \RR$. In fact the point of view of measures, at least as concerns $\nu:=\sum_{s\in S} \delta_s$ with Dirac measures $\delta_s$ placed at the points of $s\in S$, has already been taken by Kahane himself in \cite[page 303]{Kahane1} under the name "measure caract\'eristique".

\section{The appearance of the notion of a.u.u.d. on LCA groups}\label{sec:appearence}

Starting from 2003, we aimed at extending the notion of a.u.u.d. to locally compact Abelian groups (LCA groups henceforth). Our work directly stemmed out from our interest in extending, to LCA groups, some results on the so-called "Tur\'an extremal problem". We indeed succeeded to extend at least the "packing type estimate" of \cite{kolountzakis:groups} from compact groups and $\RR^d$ and $\ZZ^d$ to general LCA groups, see \cite{LCATur} and \cite{dissertation}. Further, in a recent work \cite{Berdysheva} we have similarly analyzed the Delsarte extremal problem and its relation to packing. Note that the Delsarte extremal problem proved to be the precise tool to prove the breakthrough result by Viazovska \cite{Viaz} regarding the densest ball packing in $\RR^8$, subsequently extended also to $\RR^{24}$ in \cite{CKMRV}.


The notion of a.u.u.d. is a way to grab the idea of a set being relatively considerable, even if not necessarily dense or large in some other more easily accessible sense. In many theorems, in particular in Fourier analysis and in additive problems where difference sets are considered, the a.u.u.d. is the right notion to express that a set becomes relevant in the question. However, previously the notion was only extended to sequences and subsets of the real line, and some immediate relatives like $\NN$, $\ZZ^d$, $\RR^d$, as well as to finite, or at least finitely constructed (e.g. $\sigma$-finite) cases.

A framework where the notion might be needed is the generality of
LCA groups. In recent decades it is more and more realized that
many questions e.g. in additive number theory can be investigated,
even sometimes structurally better understood/described, if we
leave e.g. $\ZZ$, and consider the analogous questions in Abelian
groups. In fact, when some analysis, i.e. topology also has a role
-- like in questions of Fourier analysis e.g. -- then the setting
of LCA groups seems to be the natural framework. And indeed
several notions and questions, where in classical results a.u.u.d.
played a role, have already been defined, even in some extent
discussed in LCA groups. Nevertheless, for long no attempt has
been made to extend the very notion of a.u.u.d. to this setup.


Parallel to the first phase of ours, a research which aimed at extending the very first topic where a.u.u.d. have been used -- concerning conditions for sets being sets of sampling or sets of interpolation -- was successfully conducted in \cite{GrochKutySeip}. The construction there is particularly interesting, because it is also a round-about way of arriving at a general notion of a.u.u.d. -- demonstrating that there was no immediate access, and the construction needed some effort. Indeed, to show that the constructed density is equivalent in $\RR^d$ to the classical one, is not quite obvious and is formulated and proved as \cite[Lemma 8]{GrochKutySeip}.

Actually, the main results of \cite{GrochKutySeip} are formulated under the additional assumption that the dual group $\widehat{G}$ is compactly generated, as it is needed for the construction of a.u.u.d. (The relaxation of this extra condition is then discussed in \cite[Section 8]{GrochKutySeip}. The key is that every bandlimited function $f$ in the studied class of functions lives on a quotient $G/K$ and may be identified with a function $\widetilde{f} \in L^2(G/K)$ with some compact subgroup $K$ such that $G/K$ factors according to the structure theorem of locally compact Abelian groups, generated compactly.) The paper constructs the definition of a.u.u.d. referring to a tricky partial ordering relation of uniformly discrete subsets; it is then mentioned that this definition can equivalently be defined using Haar measures, too. That later equivalent formulation surfacing in \cite[formula (18)]{GrochKutySeip} and the discussion following it hints the one we have worked out along quite a different way; details will be seen below. The possibility of consideration of measures and their a.u.u.d. is mentioned in this discussion, too.

We thank to Professor Joaquim Ortega-Cerd\`a for calling our attention to the (then also quite recent) work \cite{GrochKutySeip} right the day after our initial preprint \cite{density} appeared on the ArXiv. For the overlap thus pointed out that earlier version of our work has never been published in a journal, although later we have published a concise version without proofs in the conference abstract \cite{RAE} and the notion, being instrumental for the mentioned extension of the Tur\'an extremal problem to LCA groups, was also presented in the thesis \cite{dissertation} and the paper \cite{LCATur}.

\section{The first construction of a.u.u.d. in LCA groups}\label{sec:auudLCA}

We will consider two generalizations here. The first applies for
the class of Abelian groups $G$, equipped with a topological
structure which makes $G$ a LCA (locally compact Abelian) group.
Considering such groups are natural for they have an essentially
unique translation invariant Haar measure $\mu_G$ (see e.g.
\cite{rudin:groups}), what we fix to be our $\mu$. By
construction, $\mu$ is a Borel measure, that is, the sigma algebra of
$\mu$-measurable sets contains the sigma algebra of Borel mesurable
sets, denoted by $\BB$ throughout. Furthermore, we will write
$\BB_0$ for the family of open sets with compact closure. Sets $B\in \BB_0$ necessarily have positive, but finite Haar measure. If the topology changes, it is reflected by the corresponding change of the (essentially unique) Haar measure, and so the characteristic property of being finite on $\BB_0$ singles out the respective Haar measure from the family of translation-invariant Borel measures, see \cite[(15.8) Remarks, page 194]{HewittRossI}.

Our heuristics in finding a definition of a.u.u.d. was the following. We wanted to grasp the fact that the set, where we may analyze relative densities of the given set $A$ or measure $\nu$, must grow large (as in case of $\RR$ the dilated copies $rK$ do). However, in general LCA groups neither a standard basic neighborhood of $0$ nor dilations exist. Then we encountered the following nice and basic result in LCA groups, see \cite[2.6.7. Theorem]{rudin:groups} or \cite[(31.36) Lemma]{HewittRossII}.

\begin{theorem}\label{th:Rudinlemma} If $\ve>0$ and\footnote{As is commonly used, in any topological space $X$ we write $Y\Subset X$ if $Y$ is a compact subset of $X$.} $C\Subset G$, then there exists  $V\in\BB_0$ such that\footnote{Note on passing that multiplication is continuous, but assuming only that $V$ is to be Borel measurable may not suffice for our purposes. We are thankful to our referee who called our attention to the fact that then $C+V$ is not necessarily Borel measurable, (not even for compact $C$), as was shown by counterexamples in \cite{ES} and in \cite{Rogers}.} $\mu(V+C)<(1+\ve)\mu(V)$.
\end{theorem}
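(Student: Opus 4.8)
The plan is to produce $V$ as a large finite sumset of a fixed neighbourhood of the identity and to extract the desired inequality from a growth dichotomy. First I would replace $C$ by a convenient superset: by compactness of $C$, choose a relatively compact open set $U\Subset G$ with $C\cup(-C)\cup\{0\}\subseteq U$, and set $W:=U\cap(-U)$. Then $W\in\BB_0$ is open, symmetric, and contains both $C$ and the identity. For $n\ge 1$ let $V_n:=W+\cdots+W$ ($n$ summands). Each $V_n$ is open (a sumset with an open set is open) and relatively compact (since $\overline{V_n}\subseteq\overline{W}+\cdots+\overline{W}$ is a continuous image of a compact set), so $V_n\in\BB_0$ and $0<\mu(V_n)<\infty$. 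Because $0\in W$ we have $V_n\subseteq V_{n+1}$, and because $C\subseteq W$ we have $V_n+C\subseteq V_n+W=V_{n+1}$.

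It therefore suffices to find an index $n$ with $\mu(V_{n+1})<(1+\ve)\mu(V_n)$, for then $V:=V_n$ satisfies $\mu(V+C)\le\mu(V_{n+1})<(1+\ve)\mu(V_n)=(1+\ve)\mu(V)$. I would establish the existence of such $n$ by contradiction: if $\mu(V_{n+1})\ge(1+\ve)\mu(V_n)$ for every $n$, then iterating gives $\mu(V_n)\ge(1+\ve)^{n-1}\mu(W)$, so the volumes of the $V_n$ grow at least exponentially in $n$.

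The core of the argument, and the step I expect to be the main obstacle, is to contradict this exponential growth by an upper bound of polynomial type. All the $V_n$ lie in the open subgroup $H:=\langle W\rangle=\bigcup_n V_n$, which is an LCA group compactly generated by $\overline{W}$. By the structure theorem for compactly generated LCA groups, $H$ is topologically isomorphic to $\RR^a\times\ZZ^b\times K$ with $K$ compact; under this identification $W$ is contained in a box $[-R,R]^a\times F\times K$ with $R>0$ and $F\subset\ZZ^b$ finite, whence $V_n\subseteq[-nR,nR]^a\times(F+\cdots+F)\times K$, and the product Haar measure of this set grows only polynomially, as $O(n^{a+b})$. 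This polynomial bound is incompatible with exponential growth, giving the sought contradiction.

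Finally I would record two points. Essentially all the content of the proof sits in the reduction to polynomial volume growth; one can bypass the explicit structure theorem by instead invoking the amenability of Abelian groups and the existence of F\o lner sets, which yields a relatively compact open $V$ with $\mu((V+C)\triangle V)<\ve\,\mu(V)$ directly, but either route must import some nontrivial structural or fixed-point input. The remaining verifications, that each $V_n$ is open, relatively compact, and of positive finite measure, together with the monotonicity $V_n\subseteq V_{n+1}$ and the inclusion $V_n+C\subseteq V_{n+1}$, are routine consequences of the continuity of the group operations and the choice of $W$.
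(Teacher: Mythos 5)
Your proof is correct, but note that there is no internal proof to compare it against: the paper does not prove Theorem \ref{th:Rudinlemma} at all, quoting it as a known result from \cite[2.6.7 Theorem]{rudin:groups} and \cite[(31.36) Lemma]{HewittRossII}. Your argument is essentially the classical one behind those citations: pass to the open (hence closed) subgroup $H=\bigcup_n V_n=\langle\overline{W}\rangle$ generated by a symmetric, relatively compact open $W\supseteq C\cup\{0\}$, invoke the structure theorem for compactly generated LCA groups to write $H\cong\RR^a\times\ZZ^b\times K$ with $K$ compact, and exploit that boxes there have only polynomial volume growth. Where you genuinely differ from the textbook treatments is in packaging the volume computation as a growth dichotomy on the sumsets $V_n:=W+\cdots+W$: either some $n$ gives $\mu(V_n+C)\le\mu(V_{n+1})<(1+\ve)\mu(V_n)$ and you are done, or $\mu(V_n)\ge(1+\ve)^{n-1}\mu(W)$ grows exponentially, contradicting $\mu(V_n)\le(2nR)^a\cdot\#\left(F+\cdots+F\right)\cdot\mu(K)=O(n^{a+b})$; the standard proofs instead exhibit a near-invariant large box directly. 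Your variant buys a cleaner logical skeleton (no explicit near-optimal set must be produced) at the mild cost of being non-constructive in $n$. The supporting steps all check out: each $V_n$ is open with $\overline{V_n}\subseteq\overline{W}+\cdots+\overline{W}$ compact, $V_n+C\subseteq V_{n+1}$, $V_n+C$ is open and hence Borel (which incidentally disposes of the measurability worry in the paper's footnote), and the restriction of $\mu$ to the open subgroup $H$ is a Haar measure on $H$, so the product-measure estimate is legitimate. Your closing remark is also fair: the F\o lner/amenability route yields the stronger symmetric-difference version directly, and some nontrivial structural or fixed-point input is indeed unavoidable on every known path.
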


Thinking of $\RR^d$, it is natural to visualize the content of this lemma as follows. For any given compact set $C$ the difference between $V$ and $V+C$ is just a bounded (compact) perturbation on the boundary of $V$, so if $V$ is chosen quite large, than the change of volume becomes relatively negligible. This suggested us the idea of replacing limits and size restrictions by the trick of division by $\mu(V+C)$, in place of simply $\mu(V)$, in the definition of a.u.u.d., thus leading to \eqref{Cnudensity}. Indeed, if $\mu(V)$, that is $V$, is large enough -- in the sense of the above Theorem \ref{th:Rudinlemma} -- then the increase of $\mu(V)$ to $\mu(V+C)$ does not matter asymptotically; and if $V$ is not enough large, than the division by a larger measure (of $\mu(C+V)$) makes the corresponding quantity out of interest in the search of high relative density (i.e. in the inner supremum). That was our heuristical idea in the construction of the below Definition \ref{def:compactdensity}.

\begin{definition}\label{def:compactdensity}
Let $G$ be a LCA group and $\mu:=\mu_G$ be its Haar measure. If
$\nu$ is another (locally finite\footnote{A measure is called locally finite if the measure of any measurable set, contained in some compact set, is finite.}, nonnegative) measure on $G$ with the sigma algebra of
measurable sets being ${\mathcal S}$, then we define
\begin{equation}\label{Cnudensity}
\overline{D}(\nu) 
:= \inf_{C\Subset G} \sup_{V\in {\mathcal S}
\cap \BB_0} \frac{\nu(V)}{\mu(C+V)}~.
\end{equation}
In particular, if $A\subset G$ is Borel measurable and $\nu=\mu_A$
is the trace of the Haar measure on the set $A$, then we get
\begin{equation}\label{CAdensity}
\overline{D}(A) :=\overline{D}(\mu_A) := 
\inf_{C\Subset G} \sup_{V\in {\mathcal B}_0} \frac{\mu(A\cap V)}{\mu(C+V)}~.
\end{equation}
If $\Lambda\subset G$ is any (e.g. discrete) set and $\gamma
:=\gamma_\Lambda:=\sum_{\lambda\in\Lambda} \delta_{\lambda}$ is
the counting measure of $\Lambda$, then we get
\begin{equation}\label{CLdensity}
\overline{D}^{\#}(\Lambda) := \overline{D}(\gamma_{\Lambda}) := \inf_{C\Subset G} \sup_{V\in \BB_0} \frac{{\rm \# }(\Lambda\cap V)}{\mu(C+V)}~.
\end{equation}
\end{definition}


\begin{remark}\label{c:easy} If we do not want to bother with $\nu$-measurability of sets, i.e. with $V\in \SA$, then we may as well use the outer measure $\overline{\nu}$, defined for arbitrary sets. As for all $C$ we want $C+V$ belong to $\BB$ (for $\mu(C+V)$ to make sense), it is natural to consider $V\in\BB$ only; but if we further drop the condition that $\overline{V}$ be compact, then the definition becomes untractable already in $\RR$. Indeed, then it can easily happen -- and in fact that is what happens normally with a compact $C$ having $\mu(C)>0$ -- that $\mu(V+C)=\infty$; further, it is easy to find cases where also the numerator is infinite. Take e.g. $\nu$ to be the counting measure $\#$ and $\Lambda$ some sequence $\Lambda=\{\lambda_k~:~k\in \NN\}$, say tending to infinity; then it is easy to define a (non-compact, but still measurable) union $V$ of decreasingly small neighborhoods of the points $\lambda_k$ such that the Haar measure of $V$ equals 1, but all of $\Lambda$ stays in $V$, hence the counting measure of $\Lambda\cap V$ is infinite. To avoid such untractable situations, we are thus restricted to $V\in\BB_0$, which simultaneously guarantees $\mu(C+V)>0$, too, hence the fraction in the definition remains meaningful.
\end{remark}

The very first thing one wants to have after such an abstract definition, based only on some tricky heuristics, is to see that it indeed is a generalization of the classical notion.
\begin{theorem}\label{th:Requivalence}
Let $K$ be any bounded, open subset of $\RR^d$ with $|K|=|\overline{K}|=1$ (i.e. assume that $K$ itself is of positive, normalized volume $1$ and its closure is of the same measure). Let $\nu$ be any (nonnegative, locally finite) measure with sigma algebra of measurable sets $\mathcal S$. Then we have
\begin{equation}\label{Rd-equivalance}
\overline{D}(\nu) = \overline{D}_K(\nu)~.
\end{equation}
The same statement applies also to $\ZZ^d$.
\end{theorem}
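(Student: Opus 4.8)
The plan is to prove \eqref{Rd-equivalance} by establishing the two inequalities $\overline D(\nu)\ge \overline D_K(\nu)$ and $\overline D(\nu)\le \overline D_K(\nu)$ separately. Throughout, $\mu$ is Lebesgue measure on $\RR^d$, so that $\mu(rK)=r^d$ by the normalization $|K|=1$; I may also assume $\nu$ is a Borel measure, since otherwise one replaces $\nu$ by the outer measure $\overline\nu$ as in Remark \ref{c:easy}, which guarantees that every open set such as $rK+x$ is $\nu$-measurable.

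For the lower bound $\overline D(\nu)\ge \overline D_K(\nu)$ I would fix an arbitrary compact $C\Subset\RR^d$ and feed the admissible test sets $V=rK+x\in\BB_0$ into the inner supremum. Translation invariance of $\mu$ gives $\mu(C+rK+x)=\mu(C+rK)$, so for every $r,x$
\[
\sup_{V\in\SA\cap\BB_0}\frac{\nu(V)}{\mu(C+V)}\ \ge\ \frac{\nu(rK+x)}{\mu(C+rK)},
\quad\text{hence}\quad
\sup_{V}\frac{\nu(V)}{\mu(C+V)}\ \ge\ \frac{\sup_{x}\nu(rK+x)}{\mu(C+rK)}.
\]
The heart of this direction is the elementary boundary estimate $\mu(C+rK)/r^d\to1$ as $r\to\infty$: picking $c_0\in C$ and $R$ with $C\subseteq B(0,R)$ yields $r^d=\mu(c_0+rK)\le\mu(C+rK)\le\mu(B(0,R)+rK)=r^d\,\mu(K+B(0,R/r))$, and $\mu(K+B(0,\de))\downarrow\mu(\overline K)=1$ as $\de\downarrow0$ because $\bigcap_{\de>0}(K+B(0,\de))=\overline K$ and $|\overline K|=1$. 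Taking $\limsup_{r\to\infty}$ on the right therefore produces the constant $\overline D_K(\nu)$, so the fixed left-hand side dominates it; since $C$ was arbitrary, the $\inf_C$ preserves the bound. Note this is exactly where the normalization $|\overline K|=1$ (equivalently $|\partial K|=0$) is used.

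The opposite inequality $\overline D(\nu)\le \overline D_K(\nu)$ is the crux, and I would obtain it from an averaging (Tonelli) identity that, pleasantly, manufactures the correct compact set for the denominator on its own. Given $\ve>0$, the $\limsup$ definition of $\overline D_K(\nu)$ supplies a single scale $r_0$ with $\nu(r_0K+x)\le(\overline D_K(\nu)+\ve)\,r_0^d$ for all $x$. For an arbitrary $V\in\SA\cap\BB_0$ I would use
\[
\int_{\RR^d}\nu\bigl(V\cap(r_0K+x)\bigr)\,dx
=\int_{\RR^d}\mathbf 1_V(y)\Bigl(\int_{\RR^d}\mathbf 1_{r_0K}(y-x)\,dx\Bigr)\,d\nu(y)
=r_0^d\,\nu(V),
\]
which is valid by Tonelli since the integrand is jointly Borel. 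In the left-hand integral the integrand is bounded by $\nu(r_0K+x)\le(\overline D_K(\nu)+\ve)r_0^d$ and vanishes whenever $x\notin V-r_0K$, so
\[
r_0^d\,\nu(V)\ \le\ (\overline D_K(\nu)+\ve)\,r_0^d\,\mu(V-r_0K)\ \le\ (\overline D_K(\nu)+\ve)\,r_0^d\,\mu(V+C),
\]
where $C:=\overline{-r_0K}$ is compact and $V-r_0K\subseteq V+C$. Cancelling $r_0^d$ gives $\nu(V)\le(\overline D_K(\nu)+\ve)\,\mu(V+C)$ uniformly in $V$, whence $\sup_V \nu(V)/\mu(V+C)\le\overline D_K(\nu)+\ve$ and a fortiori $\overline D(\nu)\le\overline D_K(\nu)+\ve$; letting $\ve\to0$ closes this direction. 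The thing to get right here is that the support of the averaged integrand is precisely a compact thickening $V+C$ of $V$, matching the denominator in Definition \ref{def:compactdensity}; this is the step that makes the abstract $\inf_C$ land exactly on Kahane's density.

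Finally I would dispatch the loose ends quickly: the degenerate case $\overline D_K(\nu)=\infty$ needs no separate work, since the lower bound already forces $\overline D(\nu)=\infty$, while the measurability of $r_0K+x$ and of the product integrand is either automatic for Borel $\nu$ or handled through the outer measure of Remark \ref{c:easy}. The assertion for $\ZZ^d$ follows by running the same two-step scheme with the counting measure as Haar measure: the continuous boundary estimate is replaced by the equally elementary comparison of the size of a dilate with that of its compact thickening, and the Tonelli averaging by a countable interchange of summation. I expect the main obstacle to be not any single estimate but the bookkeeping in the second inequality — verifying the averaging identity and recognizing that the integration region $V-r_0K$ sits inside a compact translate thickening $V+C$, which is what converts the shape-dependent Kahane density into the shape-free $\overline D(\nu)$.
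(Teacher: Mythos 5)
Your proof is correct, and its two halves relate differently to the paper's own argument. The lower bound $\overline{D}(\nu)\ge\overline{D}_K(\nu)$ is essentially the paper's proof: both rest on the boundary estimate $\mu(C+rK)\le(1+\eta)r^d$ for large $r$, obtained from $\mu(K+\de B)\to\mu(\overline{K})=1$ as $\de\to 0$; the only difference is cosmetic, in that the paper warms up with convex $K$ before the general case, while you go directly to general $K$ (your observation that $\bigcap_{\de>0}\bigl(K+B(0,\de)\bigr)=\overline{K}$ plus continuity from above is precisely the fact the paper uses without spelling it out). The upper bound is where you genuinely diverge: the paper invokes Lemma \ref{l:translationlemma} (quoted from \cite[Lemma 2]{LCATur}, whose proof is not reproduced in this paper), which for every Borel $W$ with compact closure and every $\gamma<\overline{D}(\nu)$ produces a translate with $\nu(W+x)\ge\gamma\mu(W)$, applied to $W=rK$ for every $r$. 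You instead argue self-containedly via the Tonelli identity $\int_{\RR^d}\nu\bigl(V\cap(r_0K+x)\bigr)\,dx=r_0^d\,\nu(V)$, bounding the integrand by $(\overline{D}_K(\nu)+\ve)r_0^d$ on its support $V-r_0K\subseteq V+\overline{-r_0K}$, which manufactures the compact set $C$ of Definition \ref{def:compactdensity} explicitly; this computation is sound (the integrand is jointly Borel, $\mu$ is $\sigma$-finite, and local finiteness of $\nu$ on the $\sigma$-compact $\RR^d$ gives $\sigma$-finiteness and $\nu(V)<\infty$). Each route buys something: the paper's translation lemma is a reusable statement of independent interest, showing the supremum over translates exceeds $\gamma\mu(W)$ at \emph{every} scale and not merely along a sequence of radii, whereas your averaging argument is shorter, needs no external reference, and sidesteps the ``uniformly locally bounded'' hypothesis of Lemma \ref{l:translationlemma} (a hypothesis not literally granted by the theorem's mere local finiteness; in your scheme the non-uniformly-bounded case simply falls under $\overline{D}_K(\nu)=\infty$, which your first inequality already disposes of). Your brief treatment of $\ZZ^d$ by discrete summation is in line with the paper, which likewise leaves that case to the reader.
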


\begin{remark}\label{c:easy}
In particular, we find that the asymptotic uniform upper density
$\overline{D}_K(\nu)$ does not depend on the choice of $K$, as long as $K$ is bounded and measurable with $|K|=1$ and $|\partial K|=0$.
\end{remark}

\begin{remark}\label{c:bounded} If we further drop the condition that $K$ be bounded, then the statement may fail -- or becomes untractable -- already in dimension 1, i.e. for $\RR$.
\end{remark}

The proof of this equivalence result was stated in \cite{LCATur} as Proposition 1
and fully proved in \cite{density} as Theorem 1 and in \cite{dissertation} as Theorem 3.1. Given that the proof is not available in a journal, for the reader's convenience we give the proof here, too.

\section{Proof of Theorem \ref{th:Requivalence}}\label{sec:proof}

\begin{proof} \noindent \underline{Proof of $\overline{D}(\nu) \geq
\overline{D}_K(\nu)$}.

Assume first that $K$ is a convex body (which is then also bounded, and of boundary measure zero with ${\bf 0}\in \intt K$). Let now $\tau<\tau'< \overline{D}_K(\nu)$ and $C\Subset \RR^d$ be arbitrary. Since $C$ is compact, and ${\bf 0}\in \intt K$, for some sufficiently large $r'>0$ we have $C\Subset r'K$, hence by convexity also $C+rK\subset r'K + r K = (r'+r)K$ for any $r>0$.

Observe that in view of $\tau'< \overline{D}_K(\nu)$ there exist $r_n\to\infty$ and $x_n\in \RR^d$
with $\nu(r_nK+x_n)>\tau'|r_nK|$. With large enough $n$, we also have  $|(r_n+r')K| / |r_nK| = (1+r'/r_n)^d < \tau'/\tau$, hence with $V:=r_n K+x_n$ we find $\nu(V) > \tau'|r_n K| > \tau |(r_n+r')K|= \tau |x_n+r_n K +r'K| \geq \tau |V+C|$. This proves that
$\overline{D}(\nu) \geq \tau$, whence the assertion.

\medskip
The proof is only slightly more complicated for the general case. What we need to observe is that if $B\subset \RR$ is the unit ball, then for $\eta\to 0$ we have $|K+\eta B|\to |\overline{K}|$ (where $K+\eta B=\{x+\eta b~:~x\in K, b\in B\}$ is the usual Minkowski- or complexus sum). So if $C\Subset aB$ holds (with some $a$ chosen sufficiently large) then for any given $\eta>0$ we necessarily have $|rK+ C|\le |rK+aB| = r^d |K+(a/r) B| \le r^d (1+\eta) |\overline{K}| =(1+\eta)|rK|$ for all $r>r_0=r_0(\eta,a)$ (where we have used also that $|\overline{K}|=|K|$).

As above, take $\tau<\tau'< \overline{D}_K(\nu)$ and $\tau'|r_nK| < \nu(r_nK+x_n) $ with $r_n\to \infty$. Next let us apply the above with $\eta:=\tau'/\tau-1$, noting that as $r_n\to \infty$, in particular we have $r_n>r_0$ for $n\ge n_0$. We thus find for all $n\ge n_0$ the inequalities $\tau |r_nK+aB| < \tau (1+\eta) |r_nK| =\tau' |r_nK| <\nu(r_nK+x_n)$, whence also $\tau |r_nK+C| <\nu(r_nK+x_n)$. It follows that $\limsup_{n\to\infty} \nu(r_nK+x_n)/|r_nK+C|>\tau$, for any $\tau< \overline{D}_K(\nu)$, whence even $\overline{D}(\nu) \ge \overline{D}_K(\nu)$, as wanted.

\bigskip

\noindent \underline{Proof of $\overline{D}_K(\nu) \geq
\overline{D}(\nu)$}.

\medskip
We have already shown in \cite[Lemma 2]{LCATur} the following lemma.

\begin{lemma}\label{l:translationlemma} Let $W$ be any Borel measurable
subset of a LCA group $G$ with its closure $\overline{W}$ compact, and let $\nu$ be a \emph{nonnegative}, uniformly locally bounded\footnote{A nonnegative Borel measure $\nu$ is uniformly locally bounded, if for any compact set $K$ there exists a finite constant $c=c_K$ such that $\nu(K+x) \le c$ for all $x\in G$.
} Borel
measure on $G$.

Denote $\overline{D}(\nu)=\rho$. If $\gamma<\rho$
is given arbitrarily, then there exists $x \in G$ such that
\begin{equation}\label{VN+z}
\nu(W+x)\geq \gamma \mu(W).
\end{equation}
\end{lemma}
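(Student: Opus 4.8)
The plan is to combine the definition of $\overline{D}(\nu)$ as an \emph{infimum over compact sets} with a Fubini averaging argument. First I would discard the trivial case $\mu(W)=0$, in which the right-hand side of \eqref{VN+z} vanishes and every $x$ works; so assume $\mu(W)>0$. The decisive move is to feed into the definition the single compact set $C:=-\overline{W}$ (compact because inversion is a homeomorphism and $\overline{W}$ is compact), chosen so that $C+V=V-\overline{W}$ for every $V$. Since $\rho=\overline{D}(\nu)=\inf_{C\Subset G}\sup_{V}\nu(V)/\mu(C+V)$ is the infimum over all compact $C$, the inner supremum for this particular $C$ is still $\ge\rho>\gamma$, so there exists $V\in\SA\cap\BB_0$ with
\[
\nu(V)>\gamma\,\mu(C+V)=\gamma\,\mu(V-\overline{W}).
\]

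Next I would argue by contradiction, assuming $\nu(W+x)<\gamma\,\mu(W)$ for \emph{every} $x\in G$, and then average over $x$. To keep all quantities finite I work with the truncation $\nu_V:=\nu|_V$ (finite, since $\overline{V}$ is compact and $\nu$ is locally finite) and observe that $\nu_V(W+x)=\nu\bigl(V\cap(W+x)\bigr)$ vanishes unless $x\in V-\overline{W}$, a relatively compact set of finite $\mu$-measure. Tonelli's theorem, applied to the nonnegative Borel function $(x,y)\mapsto\mathbf{1}_W(y-x)$ against the two finite measures $\mu|_{V-\overline{W}}$ and $\nu_V$, then yields the averaging identity
\[
\int_{V-\overline{W}}\nu_V(W+x)\,d\mu(x)=\int_V\Bigl(\int_{V-\overline{W}}\mathbf{1}_W(y-x)\,d\mu(x)\Bigr)d\nu(y)=\mu(W)\,\nu(V),
\]
where for each $y\in V$ the inner integral equals $\mu(y-W)=\mu(W)$ by translation invariance together with the inversion invariance of Haar measure on an Abelian group (and the set $y-W$ lies inside $V-\overline{W}$, so nothing is lost by restricting the $x$-range).

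Finally I would combine the two displays: using $\nu_V(W+x)\le\nu(W+x)<\gamma\,\mu(W)$ pointwise — which also sidesteps any measurability question about $x\mapsto\nu(W+x)$ itself, since $x\mapsto\nu_V(W+x)$ is measurable by Tonelli — the averaging identity gives $\mu(W)\,\nu(V)\le\gamma\,\mu(W)\,\mu(V-\overline{W})$, hence $\nu(V)\le\gamma\,\mu(V-\overline{W})=\gamma\,\mu(C+V)$, in direct contradiction with the strict inequality secured from the density. Thus some $x$ must satisfy $\nu(W+x)\ge\gamma\,\mu(W)$. I expect the only genuinely delicate point to be the justification of the Fubini swap: Haar measure on a general LCA group need not be $\sigma$-finite, so one must confine both integrations to finite measures (restricting $\nu$ to $V$ and $x$ to the relatively compact set $V-\overline{W}$) and invoke inversion invariance of $\mu$ to evaluate the inner integral. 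The hypothesis that $\nu$ be uniformly locally bounded serves only to keep $\nu(W+x)$ finite for every $x$, making the asserted inequality substantive; local finiteness alone suffices for the estimates above.
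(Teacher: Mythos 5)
Your proof is correct and is essentially the same argument as the one the paper relies on: the paper does not reprove this lemma but cites \cite[Lemma 2]{LCATur}, and the route there is precisely your one — feed the single compact set $C=-\overline{W}$ into the infimum defining $\overline{D}(\nu)$ to get $V$ with $\nu(V)>\gamma\,\mu(V-\overline{W})$, then average $\nu(W+x)$ over $x\in V-\overline{W}$ via Tonelli, using $y-W\subset V-\overline{W}$ for $y\in V$ and inversion/translation invariance of $\mu$ to obtain $\int_{V-\overline{W}}\nu_V(W+x)\,d\mu(x)=\mu(W)\,\nu(V)$. Your restriction of both integrations to finite, compactly supported pieces is exactly the right care regarding the possible non-$\sigma$-finiteness of Haar measure, and your observation that uniform local boundedness only keeps $\nu(W+x)$ finite (local finiteness sufficing for the estimates) is accurate; the sole remaining micro-technicality, the joint product-measurability of $(x,y)\mapsto\mathbf{1}_W(y-x)$ when $G$ is not second countable, is standard Fubini-for-Haar-measure material and is no worse in your write-up than in the cited proof.
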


To prove the inequality $\overline{D}_K(\nu) \geq \overline{D}(\nu)$, and whence Theorem \ref{th:Requivalence}, it remains to choose an arbitrary $\gamma<\overline{D}(\nu)$, put $W:=rK$ with any $r>0$, and apply the Lemma: we will get a translate $W+x=rK+x$ with $\nu(x+rK) \ge \gamma |rK|$. Then it follows that $\inf_{r>0} \sup_{x \in \RR^d} \nu(x+rK)/ |rK| \ge \gamma$, whence also $\overline{D}_K(\nu) \ge \gamma$, and this holding for all $\gamma<\overline{D}(\nu)$ gives the statement.
\end{proof}

\section{An even larger notion of a.u.u.d.}\label{sec:auud-discrete}

Note if we consider the discrete topological structure on any
Abelian group $G$, it makes $G$ a LCA group with Haar measure
$\mu_G={\rm \#}$, the counting measure. Therefore, our notions
above certainly cover all discrete groups. This is the natural
structure for $\ZZ^d$, e.g. On the other hand all $\sigma$-finite
groups admit the same structure as well, unifying considerations.
(Note that $\ZZ^d$ is not a $\sigma$-finite group since it is {\em
torsion-free}, i.e. has no finite subgroups.)

Furthermore, we also introduce a second notion of density as
follows.

\begin{definition}\label{finitedensity}
Let $G$ be a LCA group and $\mu:=\mu_G$ be its Haar measure. If
$\nu$ is another (locally finite, nonnegative) measure on $G$ with the sigma algebra of
measurable sets being ${\mathcal S}$, then we define
\begin{equation}\label{Fnudensity}
\overline{\D}(\nu) := 
\inf_{F\subset G,\,{\rm \#} F<\infty } \sup_{V\in {\mathcal S} \cap \BB_0} \frac{\nu(V)}{\mu(F+V)}~.
\end{equation}
In particular, if $A\subset G$ is Borel measurable and $\nu=\mu_A$
is the trace of the Haar measure on the set $A$, then we get
\begin{equation}\label{FAdensity}
\overline{\D}(A) :=\overline{\Delta}(\mu_A) := 
\inf_{F\subset G,\, {\rm \#} F<\infty } \sup_{V\in \BB_0} \frac{\mu(A\cap
V)}{\mu(F+V)}~.
\end{equation}
If $\Lambda\subset G$ is any (e.g. discrete) set and $\gamma
:=\gamma_\Lambda:=\sum_{\lambda\in\Lambda} \delta_{\lambda}$ is
the counting measure of $\Lambda$, then we get
\begin{equation}\label{FLdensity}
\overline{\D}^{\rm \#}(\Lambda):=\overline{\Delta}(\gamma_{\Lambda}) :=
\inf_{F\subset G,\,{\rm \# } F< \infty} \sup_{V\in \BB_0} \frac{{\rm \# }(\Lambda\cap
V)}{\mu(F+V)}~.
\end{equation}
\end{definition}

The two definitions are rather similar, except that the
requirements for $\overline{\D}$ refer to finite sets only.
Because all finite sets are necessarily compact, \eqref{Cnudensity} of Definition \ref{def:compactdensity} extends the same infimum over a wider family of sets than \eqref{Fnudensity}
of Definition \ref{finitedensity}; therefore we get

\begin{proposition}\label{prop:densitycompari}
Let $G$ be any LCA group, with normalized Haar measure $\mu$. Then we have
\begin{equation}\label{Fd-equivalance-gen}
\overline{\D}(\nu) \ge \overline{D}(\nu)~.
\end{equation}
Furthermore, in a discrete Abelian group $G$ we always have
$\overline{\D}(\nu) = \overline{D}(\nu)~.$
\end{proposition}

The second part is even more obvious, because in discrete groups the Haar measure is the counting measure and the compact sets are exactly the finite sets. So there is no difference for $\ZZ$, e.g. In general, however, the two densities, defined above, may be different.

As for the heuristical idea of grasping growth of $V$ through the above trick of taking $\mu(V+C)$ instead of dilations -- which in general do not exist -- we must admit that in Definition \ref{finitedensity} the heuristics fail. That is the essence of the following straightforward example, showing that indeed $\overline{\Delta}(\nu) > \overline{D}(\nu)$ for some $\nu$ whenever $G$ is not discrete. This we were guessing and V. Totik showed that this is indeed the case.

\begin{proposition}[Totik, \cite{totik}]\label{prop:DeltalargerthanD} If $G$ is a non-discrete LCA group, then there exists a probability measure $\nu$ such that $\overline{\Delta}(\nu) > \overline{D}(\nu)$.
\end{proposition}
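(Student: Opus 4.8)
The plan is to exhibit the simplest possible probability measure, namely a single point mass $\nu := \delta_p$ for an arbitrary $p\in G$, and to show that for such $\nu$ the two densities already separate, with $\overline{\Delta}(\nu)=+\infty$ while $\overline{D}(\nu)$ stays finite. The mechanism is exactly the discrepancy between the two definitions: a finite set $F$ can only inflate the denominator by a bounded factor, since $\mu(F+V)\le (\#F)\,\mu(V)$ by translation invariance, whereas a compact set $C$ of positive Haar measure keeps the denominator bounded below, $\mu(C+V)\ge \mu(C)$, no matter how small $V$ is. In a non-discrete group one can make $\mu(V)$ arbitrarily small while still capturing the full mass $\delta_p(V)=1$, and this is what drives $\overline{\Delta}$ to infinity but leaves $\overline{D}$ finite.

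First I would record the one structural fact about non-discrete groups that the argument rests on: every point has Haar measure zero, and consequently admits open, relatively compact neighborhoods of arbitrarily small measure. The measure-zero claim follows since, were $\mu(\{0\})=c>0$, translation invariance would give measure $c$ to every singleton; but a non-discrete $G$ possesses an infinite compact neighborhood $K$ of $0$ (a finite neighborhood would, by the Hausdorff property, make $\{0\}$ open and force the topology to be discrete), and an infinite set all of whose points have measure $c$ has $\mu(K)=+\infty$, contradicting the finiteness of Haar measure on compacta. Outer regularity of $\mu$ then yields open sets $U_n\ni p$ with $\mu(U_n)\to 0$, and intersecting with a fixed relatively compact open neighborhood of $p$ we may take the $U_n$ to lie in $\BB_0$.

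With this in hand the computation is immediate. For any finite $F\subset G$ and any $U_n\ni p$ as above, $\delta_p(U_n)=1$ and $\mu(F+U_n)\le (\#F)\,\mu(U_n)\to 0$, so $\sup_{V\in\SA\cap\BB_0}\delta_p(V)/\mu(F+V)=+\infty$; as this holds for every finite $F$, we get $\overline{\Delta}(\delta_p)=+\infty$. On the other hand, fix any compact neighborhood $C_0$ of the identity, so $\mu(C_0)>0$. For every $V\in\BB_0$ with $p\in V$ we have $C_0+V\supseteq C_0+p$, hence $\mu(C_0+V)\ge \mu(C_0)$, while for $p\notin V$ the numerator vanishes; therefore $\sup_V \delta_p(V)/\mu(C_0+V)\le 1/\mu(C_0)$, and taking the infimum over compact sets gives $\overline{D}(\delta_p)\le 1/\mu(C_0)<\infty$. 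Combining, $\overline{\Delta}(\delta_p)=+\infty>\overline{D}(\delta_p)$, which proves the proposition for both compact and non-compact $G$.

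The only genuinely non-trivial point is the structural lemma of the second paragraph --- that non-discreteness forces singletons to be $\mu$-null and hence forces the existence of small-measure sets in $\BB_0$ --- and everything else is bookkeeping. I would also remark that if one insists on an example in which \emph{both} densities are finite, so that the failure of the heuristics is displayed for a ``tame'' measure rather than via an infinite value, one should instead spread the mass over a carefully chosen sequence of sets whose Haar measures shrink, while keeping the local density $\nu(V)/\mu(V)$ bounded; arranging this so that the finite-set infimum still strictly exceeds the compact-set infimum is more delicate, and that is where the real work would lie.
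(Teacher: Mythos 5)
Your proposal is correct and follows essentially the same route as the paper: both take a Dirac point mass as the probability measure, drive $\overline{\Delta}$ to $+\infty$ by feeding in open sets of arbitrarily small Haar measure containing the atom (using $\mu(F+V)\le (\# F)\,\mu(V)$), and bound $\overline{D}$ by $1/\mu(C)$ for a compact $C$ of positive measure. The only deviation is in producing the small-measure neighborhoods: the paper constructs them inductively, placing two disjoint translates of $U_{k+1}$ inside $U_k$ so that $\mu(U_{k+1})\le \mu(U_k)/2$, whereas you first prove $\mu(\{p\})=0$ by a counting argument on an infinite compact neighborhood and then invoke outer regularity of the Haar measure --- a correct, slightly more top-down variant of the same step.
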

\begin{proof} First we find an open set which has small Haar measure (which is clearly not possible if $G$ was discrete.) Take an open neighborhood $U$ of 0 with compact closure (and thus of finite Haar measure $0<\mu(U)<\infty$), and let $U_0:=U$.

First we will construct other neighborhoods $U_k$ inductively for all $k\in\NN$ and with small Haar measure. As $G$ is non-discrete, with any given $k$ the neighborhood $U_k$ contains some point $0\ne x_k\in U_k$ out of 0 itself. As addition is a continuous function from $G\times G \to G$, $0+x_k=x_k\in U_k$, and $U_k$ is open, there exists a neighborhood $V_k$ of 0, so that $V_k\times (V_k+x_k)$ is mapped inside $U_k$. Also, by assumption that $G$ is Hausdorff, there are neighborhoods $W_k$ and $W'_k$ of 0 and $x_k$, respectively, which are disjoint. Therefore, taking now $U_{k+1}:=V_k \cap W_k \cap U_k \cap (W'_k-x_k) \cap (U_k-x_k)$ (which is still a neighborhood of 0) we find that $U_{k+1}\cap(U_{k+1}+x_k) =\emptyset$ while  $U_{k+1}, U_{k+1}+x_k \subset U_k$. It follows that $U_{k+1}$ is an open neighborhood of 0, within $U_k$, and its Haar measure is at most $\mu(U_k)/2$.

Therefore, arbitrarily small Haar measures can be prescribed: if $\eta>0$, there exists some open neighborhood $W$ of 0 such that $0<\mu(W)<\eta$. It also follows by outer regularity of $\mu$ that $\mu(\{0\})=0$ for the one point compact set $\{0\}$, whence for any finite set $F\subset G$ we have $\mu(F)=0$, too.

Now we take $\nu:=\delta_0$ the Dirac measure at 0. Let us compute first $\overline{D}(\nu)$. Consider $C\Subset G$ with a positive measure; clearly then $\sup_{V\in \BB_0} \nu(V)/\mu(V+C)\leq 1/\mu(C)$. In fact, this would be attained for $V:=\{0\}$, which is of measure 0, and has compact closure. So, if it was $V \in \BZ$, then taking infimum over $C$ we would find that $\overline{D}(\nu)=1$ if the group $G$ is compact but non-discrete (and thus is normalized to have $\mu(G)=1$, the infimum actually attained for $C:=G$) and $\overline{D}(\nu)=0$ when $G$ is non-compact (and thus there exist compact sets of arbitrarily large measure).

Next we compute $\overline{\Delta}(\nu)$. The same heuristical argument (with the same choice of $\{0\}$ for $V$) with a finite set $F\subset G$ in place of $C\Subset G$ gives that $\overline{\Delta}(\nu)=\inf_{F\subset G, \#F<\infty} 1/\mu(F)$, which, however, is $1/0=\infty$ whenever $G$ is non-discrete. Even if we restrict $V \in \BZ$, the same result obtains taking first some $V\in\BB_0$ with $0<\mu(V)<\eta$, and then writing $\nu(V)/\mu(V+F)\geq 1/(\# F \eta)$, hence $\sup_{V\in \BB_0} \nu(V)/\mu(V+F) \geq \sup_{\eta>0} 1/(\# F \eta)= \infty$, for any fixed finite subset $F$. On this, not even the subsequent $\inf_F$ can help. Therefore the inequality $\overline{\Delta}(\nu) > \overline{D}(\nu)$ is proved if $G$ is non-discrete.
\end{proof}

Note that here -- contradicting to our original heuristics of $\mu(C+V) \to \infty$ together with $\mu(V)\to \infty$ whenever the defined value of our density is approximated closely -- the sets which exhibit close-to-optimal density are very small ones. Applications of density are used in different contexts; in general in e.g. number theory a density is understood as some form of asymptotic density, with measures tending to infinity, but in e.g. real analysis local densities, over small neighborhoods, are equally important. Even if we have a certain heuristics telling what we would like to grasp, we should be careful not to be misled by our own imaginations: this density, what we have defined above, may be extremal also in small sets sometimes. We will see other instances, too, when the heuristics -- e.g. that "the larger the density is, the better it is for a plausible statement" -- may fail.


\section{Some additive number theory flavored results for difference sets}
\label{sec:additiveresults}

We have already noted that extremal problems of Tur\'an and Delsarte, as well as conditions for sets being sets of sampling or interpolation, can be investigated in the generality of LCA groups by means of the a.u.u.d. properly extended. Here we collect a few other instances, mainly of  number theoretic flavor, where generalizations have also been tried, and where we will apply our general definition to extend known results of more restrictive cases to LCA groups in general.

Let us denote the usual upper density of $A\subset \NN$ as
$\overline{d}(A):=\lim\sup_{n\to\infty} A(n)/n > 0$ with $A(n):=
{\rm \#} (A\cap [1,n])$. Erd\H os and S\'ark\"ozy (seemingly
unpublished, but quoted in \cite{hegyvari:differences} and in
\cite{ruzsa:difference}) observed the following.

\begin{proposition}[Erd\H os-S\'ark\"ozy]\label{prop:ErdSar}
If the upper density $\overline{d}(A)$ of a sequence $A\subset
\NN$ is positive, then writing the positive elements of the
sequence $D(A):=D_1(A):=A-A$ as $D(A) \cap \NN
=\{(0<)d_1<d_2<\dots\}$ we have $d_{n+1}-d_n=O(1)$.
\end{proposition}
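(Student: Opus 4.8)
The plan is to derive this from the syndeticity of difference sets rather than to estimate the gaps $d_{n+1}-d_n$ by hand, because a direct counting argument only produces position-dependent gap bounds (I return to this point at the end). The reduction has three steps: (i) show that positive ordinary upper density forces positive asymptotic uniform upper density, (ii) invoke the syndeticity of $A-A$, and (iii) read the bounded-gap conclusion off the definition of a syndetic set in $\ZZ$.

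First I would compare the two densities. Working in $G=\ZZ$, Theorem \ref{th:Requivalence} identifies the abstract density $\overline{D}(A)$ with the classical centered a.u.u.d. $\lim_{r\to\infty}\sup_{x\in\ZZ}\frac{\#(A\cap[x-r,x+r])}{2r}$. Since the one-sided averages $A(n)/n=\#(A\cap[1,n])/n$ are, up to negligible boundary effects, special cases of the position-optimized centered averages (take center $x=n/2$ and radius $r=n/2$, so that the window is $[0,n]$), the centered supremum dominates them, whence $\overline{D}(A)\ge\overline{d}(A)$. In particular the hypothesis $\overline{d}(A)>0$ yields $\overline{D}(A)>0$.

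Next I would apply the syndeticity result. By Proposition \ref{prop:Furst} (equivalently Theorem \ref{thm:intorgenth} specialized to $G=\ZZ$), the set $D(A)=A-A$ is syndetic in $\ZZ$: there is a finite, hence compact, set $F\Subset\ZZ$, say $F\subseteq[-R,R]$, with $D(A)+F=\ZZ$. Unpacking this, every integer $z$ satisfies $z-f\in D(A)$ for some $f\in F$, so $D(A)\cap[z-R,z+R]\neq\emptyset$ for every $z$. Applying this to $z=d_n+R+1$ shows that the interval $(d_n,\,d_n+2R+1]$ already contains an element of $D(A)$, and by the maximality of consecutive differences this forces $d_{n+1}-d_n\le 2R+1$. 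Since $R$ depends only on $A$ (through $F$), this is exactly $d_{n+1}-d_n=O(1)$; and because $D(A)$ is symmetric, controlling its positive part suffices.

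The entire weight of the argument sits in the syndeticity input, so within this reduction the only genuinely delicate point is getting the density comparison in the correct direction: one must check that the \emph{weaker}, one-sided hypothesis $\overline{d}(A)>0$ still produces positive a.u.u.d. I would also make explicit why I avoid a self-contained pigeonhole proof. If $(m,m+M]$ is a gap of $D(A)$, then the translates $A+j$ with $j\in(m,m+M]$ are all disjoint from $A$, and a disjointness/counting estimate (or the cluster decomposition of $A$ that it induces, in which $A$ splits into pieces of diameter $\le m$ separated by more than $m+M$) yields only a bound of the shape $M\lesssim m/\overline{d}(A)$. This grows with the location $m$ and so cannot give a uniform gap bound; obtaining the absolute bound is precisely the content of the syndeticity theorem, which is why I route the proof through it.
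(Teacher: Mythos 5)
Your reduction is exactly the paper's own proof (the closing passage of Section \ref{sec:additiveproposgeneralized}): there the author notes that on $\ZZ$ one has $\overline{\D}(A)=\overline{D}(A)\ge\overline{d}(A)$, invokes Corollary \ref{cor:genFurst} to obtain a finite translation set $K\subset\NN$ with $(A-A)+K=\ZZ$, and bounds $d_{n+1}-d_n-1$ by the maximal element of $K$ --- matching your three steps, up to the cosmetic difference that you take $F\subseteq[-R,R]$ and conclude $d_{n+1}-d_n\le 2R+1$. One caveat on your closing motivational remark: a direct disjointness/counting argument \emph{does} give a uniform bound --- the paper's proof of Theorem \ref{th:general-hegyvari} is precisely such an argument (pairwise disjoint translates $A+b_i$ together with a greedy maximal set $B$ satisfying $(B-B)\cap(A-A)=\{0\}$, yielding $\#B\le[1/\overline{\D}(A)]$) --- so what fails is only your specific variant that draws all translates from a single gap $(m,m+M]$, not elementary counting as such.
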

This is analogous, but not contained in the following result of
Hegyv\'ari, obtained for $\sigma$-finite groups. An Abelian group
is called $\sigma$-finite (with respect to $H_n$), if there exists
an increasing sequence of {\em finite} subgroups $H_n$ so that
$G=\cup_{n=1}^\infty H_n$. For such a group Hegyv\'ari defines
asymptotic upper density (with respect to $H_n$) of a subset $A
\subset G$ as
\begin{equation}\label{GHdensity}
\overline{d}_{H_n}(A) := \limsup_{n\to\infty} \frac{{\rm \#}
(A\cap H_n)}{{\rm \#} H_n}~.
\end{equation}
Note that for finite groups this is just ${\rm \#} (A\cap G) /
{\rm \#} G$. Hegyv\'ari proves the following \cite[Proposition
1]{hegyvari:differences}.

\begin{proposition}[Hegyv\'ari]\label{thm:hegyvari}
Let $G$ be a $\sigma$-finite Abelian group with respect to the
increasing, exhausting sequence $H_n$ of finite subgroups and let
$A\subset G$ have positive upper density with respect to $H_n$.
Then there exists a finite subset $B\subset G$ so that $A-A+B=G$.
Moreover, we have ${\rm \#}B\le 1/\overline{d}_{H_n}(A)$.
\end{proposition}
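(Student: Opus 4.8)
The plan is to prove both assertions simultaneously by a single greedy covering argument carried out \emph{directly in} $G$, avoiding any attempt to first solve the problem inside one finite subgroup and then pass to a limit. Write $d:=\overline{d}_{H_n}(A)>0$ and $\delta_N:=\#(A\cap H_N)/\#H_N$, so that $\limsup_{N\to\infty}\delta_N=d$. I would construct $B=\{b_1,b_2,\dots\}$ one element at a time: set $b_1:=0$, and having chosen $b_1,\dots,b_j$, if $(A-A)+\{b_1,\dots,b_j\}\neq G$, pick any $b_{j+1}\in G\setminus\big((A-A)+\{b_1,\dots,b_j\}\big)$. The goal is to show this process must stop after at most $1/d$ steps, at which point $(A-A)+B=G$ holds by construction, with $\#B\le 1/d$.

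The key observation is a disjointness property of the translates $A+b_i$. I claim that $A+b_i$ and $A+b_j$ are disjoint whenever $i\neq j$. Indeed, if $a+b_i=a'+b_j$ with $a,a'\in A$ and $i<j$, then $b_j-b_i=a-a'\in A-A$, so $b_j\in (A-A)+b_i\subseteq (A-A)+\{b_1,\dots,b_{j-1}\}$, contradicting the choice of $b_j$. Hence the sets $A+b_1,\dots,A+b_j$ are pairwise disjoint at every stage of the construction.

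Next I would convert disjointness into the numerical bound using the density. At a given stage the finitely many points $b_1,\dots,b_j$ all lie in some finite subgroup $H_{N_0}$, because the $H_N$ are increasing and exhaust $G$. For every $N\geq N_0$ the translates $(A\cap H_N)+b_i$ are pairwise disjoint subsets of $H_N$ (they sit inside the disjoint sets $A+b_i$, and $b_i\in H_N$ keeps them inside the subgroup), so $j\cdot\#(A\cap H_N)\leq\#H_N$, i.e. $j\,\delta_N\leq 1$. Since $\limsup_{N}\delta_N=d$ and deleting finitely many indices does not change the $\limsup$, there are arbitrarily large $N\geq N_0$ with $\delta_N>d-\varepsilon$; letting $\varepsilon\to0$ gives $jd\leq1$, that is $j\leq 1/d$. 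Consequently the greedy process can never accumulate more than $1/d$ points, so it must terminate, and at termination $(A-A)+B=G$ with $\#B\leq 1/d$.

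The step I expect to be the main obstacle is precisely this passage from a single finite subgroup to all of $G$. Working inside one large $H_{n_k}$ one can cover $H_{n_k}$ by $(A\cap H_{n_k})-(A\cap H_{n_k})+B_k$ for some $B_k$ of size at most $1/\delta_{n_k}$ (by the same disjointness argument), but these optimal sets $B_k$ live in different groups and need not stabilize, so a naive compactness or diagonal limit is unavailable when $G$ is infinite. The device that resolves this is to run the greedy construction globally and to notice that any finite set of already-chosen centers is captured by some $H_N$, while the $\limsup$ definition of $d$ furnishes arbitrarily large such $N$ of near-maximal density; this is exactly what couples the global covering to the uniform bound $\#B\le 1/d$.
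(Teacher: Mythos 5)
Your proof is correct. One point of orientation: the paper does not prove Proposition~\ref{thm:hegyvari} directly --- it quotes it from Hegyv\'ari's paper and later recovers it as a special case of Theorem~\ref{th:general-hegyvari}, so that theorem's proof is the natural point of comparison. The combinatorial skeleton is the same in both arguments: a set $B$ chosen so that the translates $A+b_i$ are pairwise disjoint (equivalently, $(A-A)\cap(B-B)=\{0\}$), a packing bound on $\#B$, and a maximality step turning a maximal such $B$ into the covering $A-A+B=G$; your greedy construction and the paper's abstract ``maximal set'' are interchangeable here. Where you genuinely diverge is the counting step. The paper works with the abstract density $\overline{\D}$: it takes $C:=L=\{b_1,\dots,b_k\}$ in the definition, extracts a window $V$ with $|A\cap(V+x)|>\tau|V+L|$, and plays this against $|V+L|\ge k\,|A\cap(V+x)|$, obtained because the enlarged window $V+L$ contains the $k$ disjoint translates; the denominator $\mu(V+L)$ is exactly the device that makes this work on an arbitrary LCA group, where no exhausting subgroups exist. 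You instead exploit the special structure of Hegyv\'ari's density: since $b_1,\dots,b_j\in H_{N_0}$ and each $H_N$ with $N\ge N_0$ is a subgroup, the disjoint translates $(A\cap H_N)+b_i$ stay \emph{inside} $H_N$, so the window absorbs the translations with no enlargement, giving $j\,\delta_N\le 1$ directly, and the $\limsup$ yields $j\le 1/d$. This is more elementary (and presumably close to Hegyv\'ari's original route), but strictly less general: it uses that the windows are subgroups containing the chosen points, which has no analogue for $\overline{\D}$ on a general group. The paper's route buys the generalization to all LCA groups, from which the proposition follows since $\overline{\D}(A)\ge\overline{d}_{H_n}(A)$ (for any finite $F$ take $V:=H_N\supseteq F$, so that $F+V=H_N$), even with the slightly sharper bound $\#B\le[1/\overline{\D}(A)]$; your route buys a short, self-contained proof of exactly the stated proposition.
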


F\"urstenberg calls a subset $S\subset G$ in a topological Abelian
(semi)group a {\em syndetic} set, if there exists a compact set
$K\subset G$ such that for each element $g\in G$ there exists a
$k\in K$ with $gk\in S$; in other words, in topological groups
$\cup_{k\in K} Sk^{-1}=G$.

Then he presents as Proposition 3.19 (a) of \cite{Furst} the following.

\begin{proposition}[F\"urstenberg]\label{prop:Furst} Let $S\subset
\ZZ$ with positive a.u.u.d. Then $S-S$ is syndetic.
\end{proposition}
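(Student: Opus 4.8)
The plan is to exploit that in $\ZZ$ compact sets are exactly the finite sets, so syndeticity of $S-S$ amounts to producing a \emph{finite} set $B\subset\ZZ$ with $(S-S)+B=\ZZ$; taking $K:=-B$ then gives the finite witness $\cup_{k\in K}\big((S-S)-k\big)=\ZZ$ of the definition. Concretely this just says that $S-S$ has uniformly bounded gaps. I would build $B$ as a \emph{maximal} family of pairwise disjoint translates of $S$: let $B$ be a set, maximal with respect to inclusion (or of maximum cardinality), such that the translates $\{S-b : b\in B\}$ are pairwise disjoint. The covering property is then automatic: for any $g\in\ZZ$, maximality forces $(S-g)\cap(S-b)\neq\emptyset$ for some $b\in B$ (or $g\in B$ already), i.e.\ $s-g=s'-b$ for some $s,s'\in S$, hence $g-b=s-s'\in S-S$ and so $g\in(S-S)+b\subseteq (S-S)+B$. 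Since $0\in S-S$ (as $\overline{D}(S)>0$ makes $S$ nonempty), the case $g\in B$ is covered too, and we obtain $(S-S)+B=\ZZ$.

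The substantive step is to bound $\#B$, and this is also where the main obstacle lies. I would show that \emph{any} finite family of pairwise disjoint translates $S-b_1,\dots,S-b_K$ satisfies $K\le 1/\overline{D}(S)$, which simultaneously guarantees that a maximal $B$ exists and is finite. The naive hope — that uniform upper density is superadditive over disjoint unions — fails, precisely because $\overline{D}$ is a supremum over \emph{window positions} and different sets may concentrate their mass in far-apart windows; this is the key difficulty. The resolution is that a \emph{finite} family involves only finitely many shifts $b_i$, lying in a bounded range, say within an interval of length $\beta$. Fixing $\varepsilon>0$ and choosing a scale $r\gg\beta$, the definition of $\overline{D}(S)$ supplies a window $[x,x+r)$ with $\#\big(S\cap[x,x+r)\big)\ge(\overline{D}(S)-\varepsilon)\,r$; translating, each $S-b_i$ carries at least $(\overline{D}(S)-\varepsilon)\,r$ points inside the \emph{common} interval $[x-\beta,x+r)$ of length $r+\beta$. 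Disjointness then gives $K(\overline{D}(S)-\varepsilon)\,r\le r+\beta$, and letting $r\to\infty$ and $\varepsilon\to0$ yields $K\le 1/\overline{D}(S)$. Thus one dense window at a large enough scale serves all translates at once, which is exactly what the bounded spread of $B$ makes possible.

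Combining the two steps, a maximum-cardinality disjoint family $B$ has $\#B\le 1/\overline{D}(S)<\infty$ and satisfies $(S-S)+B=\ZZ$, so $S-S$ is syndetic. I would close by remarking that this argument is structurally robust: the maximal-packing step is purely set-theoretic, and the counting step uses only that the Haar/counting measure is translation invariant together with a single asymptotically-dense window whose scale dominates the (now \emph{compact}) spread of the translating set. This is precisely the form that transplants to a general LCA group $G$ with $\mu(F+V)$- or $\mu(C+V)$-type denominators, and it recovers Hegyv\'ari's quantitative bound $\#B\le 1/\overline{D}(S)$ in the present $\ZZ$ setting.
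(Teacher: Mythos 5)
Your proposal is correct and is essentially the paper's own argument: the paper (via Theorem~\ref{th:general-hegyvari}) also takes a maximal set $B$ with the translates of $S$ pairwise disjoint, i.e.\ $(S-S)\cap(B-B)=\{0\}$, bounds $\#B\le[1/\overline{D}(S)]$ by a packing count, and derives $(S-S)+B=G$ from maximality exactly as you do. Your explicit window computation with the common interval of length $r+\beta$ is just the $\ZZ$-instantiation of the paper's abstract step, where choosing $C:=L$ in the density definition makes the denominator $\mu(V+L)$ play precisely the role of your enlarged window.
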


In the following we use the above extended notions of a.u.u.d. on arbitrary LCA groups, and present various generalized versions of the above results. Furthermore, we obtain sharpened variants of these results making use of both density notions.

\section{The first extension of the propositions of Erd\H os-S\'ark\"ozy, of Hegyv\'ari, and of F\"urstenberg\label{sec:additiveproposgeneralized}}

Recalling the definition $\overline{\D}$ from Definition \ref{finitedensity}, in this section we will prove the following result.

\begin{theorem}\label{th:general-hegyvari} If $G$ is a LCA group with Haar measure $\mu$,
and $A\subset G$ has $\overline{\D}(A)>0$, then there exists a finite subset $B\subset G$ so that $A-A+B=G$. Moreover, we can find $B$ with $\# B \le [1/\overline{\D}(A)]$.
\end{theorem}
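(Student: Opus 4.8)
The plan is to reduce the covering statement $A-A+B=G$ to a packing estimate for disjoint translates of $A$, the packing bound being supplied directly by the definition of $\overline{\Delta}$. Write $\delta:=\overline{\Delta}(A)>0$; note that $\delta>0$ forces $A\neq\emptyset$. The elementary observation driving everything is that whenever two translates meet, $(A+s)\cap(A+t)\neq\emptyset$, one has $s-t\in A-A$. Consequently, if $T=\{t_1,\dots,t_m\}\subset G$ is a finite set such that \emph{every} translate $A+g$ ($g\in G$) meets at least one $A+t_i$, then for each $g$ we obtain $g-t_i\in A-A$ for some $i$, i.e. $g\in (A-A)+t_i$, and hence $A-A+T=G$. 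So it suffices to produce such a finite $T$ with $\#T\le[1/\delta]$, and then set $B:=T$.

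Next I would prove the packing bound: any family $A+t_1,\dots,A+t_m$ of \emph{pairwise disjoint} translates satisfies $m\le 1/\delta$. The key idea --- and the step where the definition of $\overline{\Delta}$ is used --- is to feed the finite set $F:=\{t_1,\dots,t_m\}$ into formula \eqref{FAdensity}, which gives $\sup_{V\in\BB_0}\mu(A\cap V)/\mu(F+V)\ge\overline{\Delta}(A)=\delta$, since $\overline{\Delta}(A)$ is the infimum of this supremum over all finite sets. On the other hand, for any $V\in\BB_0$ put $W:=F+V=\bigcup_i(t_i+V)$. Translation invariance of $\mu$ gives $\mu\big((A+t_i)\cap(t_i+V)\big)=\mu(A\cap V)$, and since $t_i+V\subset W$ the sets $(A+t_i)\cap W$ are pairwise disjoint subsets of $W$, each of measure at least $\mu(A\cap V)$; summing yields $m\,\mu(A\cap V)\le\mu(W)=\mu(F+V)$. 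Thus $\mu(A\cap V)/\mu(F+V)\le 1/m$ for every $V$, so the supremum is $\le1/m$, and comparison with the lower bound gives $\delta\le1/m$, i.e. $m\le1/\delta$.

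Finally I would harvest the conclusion. The packing bound shows that every family of pairwise disjoint translates of $A$ is finite, of cardinality at most $[1/\delta]$ (and the family $\{A\}$ shows at least one such family exists, as $A\neq\emptyset$); choose one of maximal cardinality $m\le[1/\delta]$, say indexed by $T=\{t_1,\dots,t_m\}$. By maximality no further translate can be adjoined disjointly, so for every $g\in G$ the translate $A+g$ must intersect some $A+t_i$ --- which is precisely the covering hypothesis of the first paragraph. Hence $A-A+T=G$ with $\#T\le[1/\delta]$, and $B:=T$ does the job.

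The main obstacle is the packing estimate, and specifically the realization that the correct finite set to test $\overline{\Delta}$ against is the set of translation centres $T$ itself; once $F=T$ is inserted into \eqref{FAdensity}, the double-counting inequality $m\,\mu(A\cap V)\le\mu(F+V)$ is what converts disjointness into the bound $m\le1/\delta$. I would also remark that it is exactly here that $\overline{\Delta}$ (infimum over finite sets) rather than $\overline{D}$ (infimum over compact sets) yields the sharp constant $[1/\overline{\Delta}(A)]$, since $T$ is finite; the weaker $\overline{D}$ would only give $1/\overline{D}(A)\ge1/\overline{\Delta}(A)$. The only routine points to verify are that translates and finite unions stay in $\BB_0$ and remain Borel measurable, which is immediate for single-point translates and finite $F$.
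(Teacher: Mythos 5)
Your proposal is correct and follows essentially the same route as the paper: your packing bound (testing the definition of $\overline{\D}$ against the finite set $F=T$ of translation centres and double-counting disjoint translates inside $F+V$) is exactly the paper's argument with the condition ``$(A+b_i)\cap(A+b_j)=\emptyset$ for $i\neq j$'' phrased there as $(A-A)\cap(B-B)=\{0\}$, and your maximal-cardinality disjoint family plays the same role as the paper's inclusion-maximal set $B$, yielding the covering $A-A+B=G$ in the identical way. The only cosmetic differences are that the paper works with a threshold $\tau<\overline{\D}(A)$ and a near-optimal $V$ rather than directly with the infimum--supremum inequality, and builds $B$ greedily; both variants are sound.
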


Theorem \ref{thm:intorgenth}, stated in the Introduction, will be a corollary of this result.

\begin{remark} We need a translation-invariant (Haar) measure, but not the topology or compactness.
\end{remark}

\begin{proof}[Proof of Theorem \ref{th:general-hegyvari}] Assume that $H\subset G$ satisfies
$(A-A)\cap(H-H)=\{0\}$ and let $L=\{b_1,b_2,\dots,b_k\}$ be any
finite subset of $H$. By construction, we have $(A+b_i)\cap
(A+b_j)=\emptyset$ for all $1\le i < j\le k$. Take now $C:=L$ in
the definition of density \eqref{FAdensity} and take
$0<\tau<\rho:=\overline{\D}(A)$. By Definition \ref{finitedensity}
of the density $\overline{\D}(A)$, there are $x\in G$ and $V
\subset G$ open with compact closure -- or, a $V\in \SA$ with
$0<|V|<\infty$ -- satisfying
\begin{equation}\label{AVx}
|A\cap(V+x)|>\tau|V+L|~.
\end{equation}
In the other direction,
\begin{equation}\label{VLetc}
V+L=\bigcup_{j=1}^k \left(V+x+(b_j-x) \right)\supset
\bigcup_{j=1}^k \left( ((V+x)\cap A)+b_j \right)-x
\end{equation}
and as $A+b_j$ (thus also $((V+x)\cap A) +b_j$) are disjoint, and
the Haar measure is translation invariant, we are led to
\begin{equation}\label{kVxA}
|V+L|\ge k|(V+x)\cap A|~.
\end{equation}
Combining \eqref{AVx} and \eqref{kVxA} we are led to
\begin{equation}\label{tauk}
|V+L| > k\tau |V+L|~,
\end{equation}
hence after cancelation by $|V+L|>0$ we get $k<1/\tau$ and so in
the limit $k\le K:=[1/\rho]$. It follows that $H$ is necessarily
finite and $\# H\le K$.

So let now $B=\{b_1,b_2,\dots,b_k\}$ be any set with the property
$(A-A)\cap(B-B)=\{0\}$ (which implies $\# B \le K$) and maximal in
the sense that for no $b'\in G\setminus B$ can this property be
kept for $B':=B\cup\{b'\}$. In other words, for any $b'\in
G\setminus B$ it holds that $(A-A)\cap (B'-B')\ne\{0\}$.

Clearly, if $A-A=G$ then any one point set $B:=\{b\}$ is such a
maximal set; and if $A-A\ne G$, then a greedy algorithm leads to
one in $\le K$ steps.

Now we can prove $A-A+B=G$. Indeed, if there exists $y\in
G\setminus(A-A+B)$, then $(y-b_j)\notin A-A$ for $j=1,\dots,k$,
hence $B':=B\cup\{y\}$ would be a set satisfying
$(B'-B')\cap(A-A)=\{0\}$, contradicting maximality of $B$.
\end{proof}

\begin{corollary} Let $A\subset \RR^d$ be a (measurable) set with
$\overline{\D}(A)>0$. Then there exist $b_1,\dots,b_k$ with $k
\le K := [1/\overline{\D}(A)]$ so that $\cup_{j=1}^k
(A-A+b_j)=\RR^d$.
\end{corollary}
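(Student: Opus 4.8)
The plan is to obtain this statement as a direct specialization of Theorem \ref{th:general-hegyvari} to the particular LCA group $G=\RR^d$. First I would observe that $\RR^d$, equipped with its usual topology and addition, is a locally compact Abelian group whose (normalized) Haar measure $\mu$ is precisely the $d$-dimensional Lebesgue measure. For a Lebesgue measurable set $A\subset\RR^d$, the trace $\mu_A$ is a locally finite nonnegative measure, so the density $\overline{\D}(A)=\overline{\D}(\mu_A)$ from Definition \ref{finitedensity} is well defined, and the hypothesis $\overline{\D}(A)>0$ is exactly the hypothesis of the theorem specialized to this group.

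Then I would simply invoke Theorem \ref{th:general-hegyvari}: since $A\subset G=\RR^d$ has $\overline{\D}(A)>0$, the theorem furnishes a finite subset $B\subset\RR^d$ with $A-A+B=\RR^d$ and $\#B\le[1/\overline{\D}(A)]=K$. Writing $B=\{b_1,\dots,b_k\}$ with $k=\#B\le K$, the identity $A-A+B=\RR^d$ unwinds to $\bigcup_{j=1}^k (A-A+b_j)=\RR^d$, which is exactly the claimed conclusion.

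I do not expect any genuine obstacle here, precisely because the substantive work has already been done in the general setting. The only points worth a sentence of care are confirming that $\RR^d$ meets the hypotheses of the theorem — which, as the Remark following it emphasizes, require only a translation-invariant (Haar) measure and neither the topology nor compactness — and that a measurable $A$ yields the locally finite measure $\mu_A$ needed for $\overline{\D}(A)$ to make sense. Both are standard facts about Lebesgue measure on $\RR^d$. Thus the corollary is an immediate transcription of the general theorem into the Euclidean language of a finite union of translates of the difference set $A-A$.
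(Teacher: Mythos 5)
Your proposal is correct and coincides with the paper's own (implicit) argument: the corollary is stated there as an immediate specialization of Theorem \ref{th:general-hegyvari} to $G=\RR^d$ with Lebesgue measure as Haar measure, exactly as you do. Your unwinding of $A-A+B=\RR^d$ into $\bigcup_{j=1}^{k}(A-A+b_j)=\RR^d$ with $k=\#B\le[1/\overline{\D}(A)]$ is precisely the intended reading, so nothing further is needed.
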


This is interesting as it shows that the difference set of a set of positive density $\overline{\D}$ is necessarily rather large: just a few translated copies cover the whole space.

\medskip

Observe that we have Proposition \ref{prop:Furst} as an immediate consequence of Theorem \ref{th:general-hegyvari}, because $\ZZ$ is discrete, and thus the two notions $\overline{\D}$ and $\overline{D}$ of a.u.u.d. coincide; moreover, the finite set $B:=\{b_1,\dots,b_k\}$ is a compact set in the discrete topology of $\ZZ$. But in fact we can as well formulate the following extension.

\begin{corollary}\label{cor:genFurst} Let $G$ be a LCA group and $S\subset G$ a set with positive a.u.u. density, i.e. $\overline{D}(S)>0$ (where $\overline{D}(S)= \overline{D}(\mu|_S)$, in line with \eqref{CAdensity} above). Then the difference set $S-S$ is a syndetic set: moreover, the set of translations $K$, for which we have $G=S+K$, can be chosen not only compact, but even to be a finite set with $\# K \leq [1/\overline{D}(S)]$ elements.
\end{corollary}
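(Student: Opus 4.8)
The plan is to obtain this corollary almost immediately from Theorem \ref{th:general-hegyvari}, once we record that its proof invokes the density only against \emph{finite} translation sets, so that the smaller density $\overline{D}$ already suffices. Recall that ``$S-S$ is syndetic'' means exactly that there is a compact---here even finite---set $K$ with $(S-S)+K=G$; thus it is enough to produce a finite $K$ with $(S-S)+K=G$ and $\# K\le[1/\overline{D}(S)]$.

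First I would reinspect the argument of Theorem \ref{th:general-hegyvari}. The density enters at a single point: for a finite set $L=\{b_1,\dots,b_k\}$ whose translates $S+b_j$ are pairwise disjoint, one takes the translation set $C:=L$ and extracts a $V\in\BB_0$ with $\mu(S\cap V)>\tau\,\mu(V+L)$ for a chosen $\tau$ below the density. Since $L$ is finite and hence compact, the defining formula \eqref{CAdensity} of $\overline{D}$ gives, for this very choice $C:=L$, the bound $\sup_{V\in\BB_0}\mu(S\cap V)/\mu(L+V)\ge\overline{D}(S)$; so taking $\rho:=\overline{D}(S)$ and any $\tau<\rho$ already produces the required $V$. (Note that for finite $L$ the set $V+L=\bigcup_j(V+b_j)$ is automatically measurable, so the delicate measurability issue for $C+V$ does not even arise.) The disjointness estimate $\mu(V+L)\ge k\,\mu(S\cap V)$ and the cancellation giving $k<1/\tau$, hence $k\le[1/\overline{D}(S)]$, are then verbatim those of Theorem \ref{th:general-hegyvari}. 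Consequently every set $H$ with $(S-S)\cap(H-H)=\{0\}$ satisfies $\# H\le[1/\overline{D}(S)]$.

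From here I would reuse the maximality argument unchanged: choose $B=\{b_1,\dots,b_k\}$ maximal subject to $(S-S)\cap(B-B)=\{0\}$ (so $\# B\le[1/\overline{D}(S)]$ by the above), and note that $(S-S)+B=G$, since any $y\notin(S-S)+B$ would let $B\cup\{y\}$ keep the property, contradicting maximality; then $K:=B$ is the desired finite syndetic translation set. A shorter, if slightly wasteful, alternative is to observe via Proposition \ref{prop:densitycompari} that $\overline{\D}(S)\ge\overline{D}(S)>0$ and to apply Theorem \ref{th:general-hegyvari} directly, yielding $(S-S)+B=G$ with $\# B\le[1/\overline{\D}(S)]\le[1/\overline{D}(S)]$. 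The one point deserving genuine care---and the step I would flag as the main obstacle---is precisely this comparison of the two densities: since $\overline{D}\le\overline{\D}$ in general, it is not formally automatic that the weaker hypothesis $\overline{D}(S)>0$ still drives the conclusion, and the resolution is exactly the observation that Theorem \ref{th:general-hegyvari} never tests the density against anything beyond finite (compact) sets.
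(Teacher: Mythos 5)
Your proposal is correct, and your ``shorter, slightly wasteful alternative'' is in fact \emph{exactly} the paper's proof: the paper disposes of the corollary in one line, citing Proposition \ref{prop:densitycompari} to get $\overline{\D}(S)\geq\overline{D}(S)>0$ and then invoking Theorem \ref{th:general-hegyvari}, whose bound $\# B\leq[1/\overline{\D}(S)]\leq[1/\overline{D}(S)]$ is even formally sharper than what the corollary claims. Your primary route --- re-running the proof of Theorem \ref{th:general-hegyvari} and noting that the density is only ever tested against the finite, hence compact, set $L$, so the defining infimum in \eqref{CAdensity} applies with $C:=L$ --- is also valid, and your side remark that $V+L=\bigcup_j(V+b_j)$ is automatically Borel is a correct observation; but this re-inspection buys nothing beyond the one-line deduction except making transparent why the weaker hypothesis suffices. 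One correction to your closing sentence: the comparison of the two densities is \emph{not} an obstacle, and the deduction via Proposition \ref{prop:densitycompari} \emph{is} formally automatic, because the inequality $\overline{\D}\geq\overline{D}$ points the favorable way twice over --- $\overline{D}(S)>0$ forces $\overline{\D}(S)>0$, so the hypothesis of Theorem \ref{th:general-hegyvari} holds, and $1/\overline{\D}(S)\leq 1/\overline{D}(S)$, so the theorem's cardinality bound implies the claimed one. (You also silently, and correctly, read the corollary's ``$G=S+K$'' as ``$G=(S-S)+K$'', which is what is meant.)
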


This corollary is immediate, because $\overline{\D}(S)\geq \overline{D}(S)$ according to Proposition \ref{prop:densitycompari}. Note that we have already stated a less precise form of this (without the estimate on the size of $K$), as Theorem \ref{thm:intorgenth} in the Introduction.

\bigskip
This indeed generalizes the proposition of F\"urstenberg. Also this result contains the result of Hegyv\'ari: for on $\sigma$-finite groups the natural topology is the discrete
topology, whence the natural Haar measure is the counting measure, and so on $\sigma$-finite groups Corollary \ref{cor:genFurst} and Theorem \ref{th:general-hegyvari} coincides. Finally, this also generalizes and sharpens the Proposition of Erd\H os and S\'ark\"ozy. Indeed, on $\ZZ$ or $\NN$ we naturally have $\overline{\D}(A)=\overline{D}(A)\geq \overline{d}(A)$, so if the latter is positive, then so is $\overline{D}(A)$; and then the difference set is syndetic, with finitely many translates belonging to a translation set $K\subset \NN$, say, covering the whole $\ZZ$. Hence $d_{n+1}-d_n-1$ cannot exceed the maximal element of the finite set $K$ of translations.

\section{Still another extension of the Lemma of Fürstenberg}\label{sec:Furst-extension-auud}

The above given generalization is satisfactory for discrete groups in particular, for those groups the $\overline{\Delta}$ notion of density matches the $\OD$ notion, and is hence a generalized version of the density used in $\ZZ$ by Fürstenberg. However, for general LCA groups, a generally smaller density, that is $\overline{D}$, is known to be the right generalization. The above Corollary \ref{cor:genFurst} settled the generalization right for this density notion. Here we pass on to a third density notion, more precisely, the same density notion but applied to the discrete "characteristic measure" or "cardinality measure". Again, for discrete groups it matches the above two notions, as is trivial from the fact that in discrete groups the Haar measure is just the counting measure. However, in general LCA groups, the number of elements is considerably smaller than the Haar measure -- in fact the cardinality measure is infinite whenever the Haar measure is positive. Therefore, out of all the a.u.u.d. notions, $\overline{D}^{\#}(S) := \overline{D}(\gamma_{S})$ becomes the largest, and knowing that this upper density is positive, is in general the weakest possible assumption on a set. Nevertheless, we have the following result even with this bigger notion of a.u.u.d..

\begin{theorem}\label{thm:strongFurst} Let $G$ be a LCA group and $S\subset G$ a set with a positive, (but finite) a.u.u.d., regarding now the counting measure of elements of $S$ in the definition of a.u.u.d., i.e. $\overline{D}^{\#}(S) := \overline{D}(\gamma_{S})$ in line with \eqref{CLdensity}. Then the difference set $S-S$ is a syndetic set.
\end{theorem}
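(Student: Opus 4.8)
The plan is to reduce the statement to the already-established Corollary \ref{cor:genFurst} by \emph{thickening} the point set $S$ into a genuine set of positive Haar-measure upper density. I would fix a symmetric open neighbourhood $U$ of $0$ with compact closure (replacing $U$ by $U\cap(-U)$ if necessary) and set $A:=S+U=\bigcup_{s\in S}(s+U)$. Then $A$ is open, hence Borel, and
\[
A-A=(S+U)-(S+U)=(S-S)+(U-U)\supset S-S .
\]
Consequently, once we know that $A-A$ is syndetic, say $(A-A)+K=G$ with $K$ compact, we immediately get $(S-S)+K'=G$ with $K':=\overline{(U-U)+K}$ compact, so $S-S$ is syndetic as well. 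Thus it suffices to prove $\overline{D}(A)>0$ and then invoke Corollary \ref{cor:genFurst} applied to $A$.

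The key point — and the place where the finiteness assumption $\rho:=\overline{D}^{\#}(S)<\infty$ enters — is a uniform bound on the overlap multiplicity of the translates $s+U$. Since $\rho<\infty$, there is a compact $C_0$ with $\#(S\cap V)\le (\rho+1)\,\mu(C_0+V)$ for every $V\in\BB_0$; specialising $V=z+U$ and using translation invariance gives, for all $z\in G$,
\[
M_0:=(\rho+1)\,\mu(C_0+U)\ \ge\ \#\bigl(S\cap(z+U)\bigr)=\#\{s\in S:\ z\in s+U\},
\]
so no point of $G$ lies in more than $M_0$ of the sets $s+U$. Hence for every $V\in\BB_0$ the union $\bigcup_{s\in S\cap V}(s+U)$ is contained in $A\cap(V+U)$ and, by the bounded multiplicity,
\[
\mu\bigl(A\cap(V+U)\bigr)\ \ge\ \frac{1}{M_0}\sum_{s\in S\cap V}\mu(s+U)\ =\ \frac{\mu(U)}{M_0}\,\#(S\cap V).
\]

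It remains to turn this into a lower bound for $\overline{D}(A)$. I would fix an arbitrary compact $C'$, put $C'':=C'+\overline{U}$ (compact), and for each $V\in\BB_0$ test the density of $A$ against the window $W:=V+U\in\BB_0$. Using the displayed inequality together with $C'+U\subset C''$,
\[
\frac{\mu(A\cap W)}{\mu(C'+W)}\ \ge\ \frac{\mu(U)}{M_0}\cdot\frac{\#(S\cap V)}{\mu(C''+V)} .
\]
Taking the supremum over $V$ and using that for the particular compact $C''$ the inner supremum is at least the infimum defining $\overline{D}^{\#}(S)$, one gets $\sup_W \mu(A\cap W)/\mu(C'+W)\ge (\mu(U)/M_0)\,\rho$; as $C'$ was arbitrary this yields $\overline{D}(A)\ge (\mu(U)/M_0)\,\rho>0$. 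Corollary \ref{cor:genFurst} then makes $A-A$ syndetic, and the reduction above finishes the proof.

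The main obstacle is exactly the mismatch of ``units'' in $\overline{D}^{\#}(S)=\overline{D}(\gamma_S)$: the numerator counts points while the denominator is Haar measure, so the disjointness/packing argument used for Theorem \ref{th:general-hegyvari} cannot be applied verbatim. The thickening device converts cardinality into Haar measure, but only the finiteness of the density keeps the overlaps of the translates $s+U$ bounded, which is precisely what preserves positivity of the density after thickening; without $\rho<\infty$ the multiplicity $M_0$ could be infinite and the argument would break down.
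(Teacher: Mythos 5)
Your proposal is correct, and it reaches Theorem \ref{thm:strongFurst} by a genuinely different (and shorter) route than the paper, although both share the same backbone: thicken $S$ by a relatively compact neighbourhood of $0$ so that counting measure is converted into Haar measure, show that the thickened set $A$ has $\overline{D}(A)>0$, and then invoke Corollary \ref{cor:genFurst} together with $A-A=(S-S)+(U-U)$. The difference lies in how the overlaps of the translates $s+U$, $s\in S$, are controlled. The paper insists on \emph{exact disjointness}, i.e.\ the packing condition $(S-S)\cap(H-H)=\{0\}$ (Lemmas \ref{l:fattening} and \ref{l:ifclearthengood}); since this can fail for every neighbourhood $H$ (witness $S=\{n+1/n:n\in\NN\}\cup\NN$ in $\RR$), the paper first splits $S$ by a graph-colouring argument into finitely many parts each satisfying the packing condition (Lemma \ref{l:partition}, where finiteness of $\rho$ bounds the vertex degrees) and then uses subadditivity of the density (Lemma \ref{l:subadditivity}) to extract one part of positive density. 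You instead tolerate overlaps and bound the covering multiplicity directly: finiteness of $\rho$ yields a compact $C_0$ with $\#\bigl(S\cap(z+U)\bigr)\le(\rho+1)\mu(C_0+U)=:M_0$ uniformly in $z$ by translation invariance (and your symmetrization of $U$ is exactly what makes ``$z\in s+U$'' equivalent to ``$s\in z+U$'', so this step is sound); integrating the indicator sum then gives $\mu\bigl(A\cap(V+U)\bigr)\ge(\mu(U)/M_0)\,\#(S\cap V)$, whence $\overline{D}(A)\ge\rho\,\mu(U)/M_0>0$. The remaining details check out: $A$, $V+U$, $C'+W$, $C''+V$ are all open (hence no measurability issues), $\#(S\cap V)<\infty$ for $V\in\BB_0$ follows from the same finiteness, $W=V+U$ and $C''=C'+\overline{U}$ are legitimate test objects, and the final compact enlargement $\overline{(U-U)+B}$ correctly transfers syndeticity from $A-A$ to $S-S$. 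What each approach buys: yours dispenses with the partition and subadditivity lemmas entirely and is essentially self-contained, while the paper's disjointness route keeps quantitative control --- it gives density exactly at least $\rho_j\mu(H)$ and the translation-set bound of Remark \ref{rem:Ksize}, $\mu(B+H-H)\le(1+\ve)\mu(H-H)/\mu(H)$ --- whereas your bound $\#B\le[M_0/(\rho\mu(U))]=[(\rho+1)\mu(C_0+U)/(\rho\mu(U))]$ carries the uncontrolled factor $\mu(C_0+U)$ depending on the auxiliary compact $C_0$, so it proves syndeticity with weaker size information; moreover, the paper's two auxiliary lemmas are presented as statements of independent interest, which your streamlined argument does not produce.
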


\begin{remark} One would like to say that a density $+\infty$ is
"even the better", so that we could drop the finiteness condition from the formulation of Theorem \ref{thm:strongFurst}. However, in non-discrete groups this is not the
case: such a density can in fact behave quite unexpectedly. Consider e.g. the
set of points $S:=\{1/n~:~n\in \NN\}$ as a subset of $\RR$.
Clearly for any compact $C$ of positive Haar (i.e. Lebesgue)
measure $|C|>0$, and for any $V\in \BB_0$ of finite measure and
compact closure, $|V+C|$ is positive but finite. Therefore, whenever
$0\in {\rm int} V$, we automatically have $\#(S\cap V)=\infty$ and
also $\#(S\cap V)/|C+V|=\infty$, hence
$\overline{D}^{\#}(S)=\infty$; but $S-S\subset [-1,1]$. Thus with a compact $B$ it is not possible for $B+S-S$ to cover $G=\RR$, and whence $S-S$ is not syndetic.
\end{remark}

\begin{problem}
The implicitly occurring set of translations $K$, for which we have $G=(S-S)+K$, seems not too well controlled in size by the proof below. However, there follows \emph{some} bound, see Remark \ref{rem:Ksize}. One may want to find the right bound, perhaps even $\mu (K) \leq [1/\overline{D}(S)]$, for an appropriately chosen compact set of translates $K$. This we cannot do yet.
\end{problem}

\begin{proof}[Proof of Theorem \ref{thm:strongFurst}] Even if the proof may be not the optimal one, we consider it worthwhile to present it in full detail, for the auxiliary steps done seem to be rather general and useful statements. Correspondingly, we break the argument in a series of lemmas.

\begin{lemma}\label{l:packdensity} Let $S\subset G$ and assume
$\rho:=\overline{D}^{\#}(S)=\overline{D} (\gamma|_S) \in (0,\infty)$. Consider any
compact set $H \Subset G$ satisfying the "packing type condition"
$H-H\cap S-S =\{0\}$ with $S$. Then we necessarily have $\mu(H)
\leq 1/\overline{D}^{\#}(S)$.
\end{lemma}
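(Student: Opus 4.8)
The plan is to read the packing condition $(H-H)\cap(S-S)=\{0\}$ geometrically and then play a point count against the definition of $\overline{D}^{\#}$ with the compact set $H$ itself inserted into the denominator. The whole argument mirrors the proof of Theorem \ref{th:general-hegyvari}, but with the roles of the Haar measure and the counting measure interchanged.

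First I would record the key consequence of the packing hypothesis: the translates $s+H$, as $s$ ranges over $S$, are pairwise disjoint. Indeed, if $x\in(s+H)\cap(s'+H)$ with $s,s'\in S$, then $s-s'=h'-h$ for some $h,h'\in H$, so $s-s'\in(S-S)\cap(H-H)=\{0\}$, forcing $s=s'$. We may assume $\mu(H)>0$ and $S\neq\emptyset$, the remaining cases being trivial.

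Next, fix any $\tau$ with $0<\tau<\rho$. Since $\rho=\inf_{C\Subset G}\sup_{V\in\BB_0}\#(S\cap V)/\mu(C+V)$, the inner supremum for the particular choice $C:=H$ is at least $\rho>\tau$, so there is some $V\in\BB_0$ with
\[
\#(S\cap V) > \tau\,\mu(H+V).
\]
Here the finiteness $\rho<\infty$ is essential: it guarantees $n:=\#(S\cap V)<\infty$ (otherwise the fraction, and hence $\rho$, would already be infinite for this $V$), so that I may enumerate $S\cap V=\{s_1,\dots,s_n\}$. Then the sets $s_i+H$ $(1\le i\le n)$ are pairwise disjoint and each is contained in $V+H$, because $s_i\in V$; since $V+H$ is open (hence Borel) and has compact closure, translation invariance of $\mu$ gives
\[
n\,\mu(H)=\mu\Big(\bigcup_{i=1}^n (s_i+H)\Big)\le \mu(V+H).
\]
Coupling this with $n>\tau\,\mu(H+V)$ and cancelling the positive finite quantity $\mu(V+H)$ (note $n\ge 1$ since the fraction is positive) yields $\mu(H)<1/\tau$. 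Letting $\tau\uparrow\rho$ then gives $\mu(H)\le 1/\rho$, as claimed.

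I do not anticipate a genuine obstacle here; the content is entirely in the disjointness-from-packing observation together with the containment $s_i+H\subset V+H$. The only points demanding care are bookkeeping ones: checking measurability of $V+H$ (fine, as $V$ is open so $V+H$ is open) and its finite positive measure, and making sure $n$ is finite and positive. These are exactly the places where the standing hypotheses $\rho\in(0,\infty)$ and $V\in\BB_0$ are used, and they match the cautionary discussion in the Remark preceding the lemma about the pathological behaviour when the density is allowed to be infinite.
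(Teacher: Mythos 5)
Your proof is correct and follows essentially the same route as the paper's: you take $C:=H$ in the definition of $\overline{D}^{\#}(S)$, derive disjointness of the translates $s+H$ from the packing condition, compare $\#(S\cap V)\,\mu(H)=\mu\bigl(\bigcup_{s\in S\cap V}(s+H)\bigr)\le\mu(V+H)$ with $\#(S\cap V)>\tau\,\mu(V+H)$, and let $\tau\uparrow\rho$. The only cosmetic differences are that the paper cancels the finite count $\#(S\cap V)$ whereas you divide through by $\mu(V+H)$, and that you make explicit the (correct) observation that $\rho<\infty$ forces $\#(S\cap V)<\infty$, which the paper leaves implicit in writing $\infty>\#(S\cap V)$.
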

\begin{proof}
Let $0<\tau<\rho$ be arbitrary. By definition of $\overline{D}^{\#}(S)$, (using $H$ in place of $C$) there must exist a set $V\in \BB_0$ so that
$\infty>\#(S\cap V)>\tau \mu(V+H)$, therefore also $\#(S\cap
V)>\tau \mu((S\cap V)+H)$. However, for any two elements $s\ne s' \in
(S\cap V)\subset S$, $(s+H)\cap (s'+H) =\emptyset$, since in case
$g\in (s+H)\cap (s'+H)$ we have $g=s+h=s'+h'$, i.e. $s-s'=h-h'$,
which is impossible for $s\ne s'$ and $(H-H)\cap (S-S)=\{0\}$.
Therefore for each $s\in (S\cap V)$ there is a translate of $H$,
totally disjoint  from all the others: i.e. the union $(S\cap
V)+H= \cup_{s\in(S\cap V)}(s+H)$ is a disjoint union. By the
properties of the Haar measure, we thus have $\mu((V\cap
S)+H)=\sum_{s\in(S\cap V)} \mu(s+H)= \#(V\cap S)\mu(H)$.

Whence we find $\#(S\cap V) \geq \tau \#(S\cap V) \mu(H)$. As $\#(S\cap V)>\tau \mu(V+H)$ was positive, we can cancel with it and infer $\mu(H)\leq 1/\tau$. This holding for all
$\tau<\rho =\overline{D}^{\#}(S)$, we obtained that any compact set $H$, satisfying the packing type condition with $S$, is necessarily bounded in measure by $1/\overline{D}^{\#}(S)$.
\end{proof}

\begin{lemma}\label{l:fattening} Suppose that $S-S\cap H-H=\{0\}$
with $\rho:=\overline{D}^{\#}(S)=\overline{D} (\gamma|_S) \in (0,\infty)$ and $H\Subset
G$ with $0<\mu(H-H)$. Then the set $A:=S+H$, (that is the trace of the Haar measure on $A$) has the asymptotic uniform upper density $\overline{D}(\mu|_A)$ not less than $\rho\cdot \mu(H)$.
\end{lemma}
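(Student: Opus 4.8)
The plan is to estimate $\overline{D}(\mu|_A)$ directly from its definition \eqref{CAdensity}, using that $A=S+H$ is a \emph{disjoint} union of the translates $s+H$, $s\in S$. Disjointness is exactly the packing computation from the proof of Lemma \ref{l:packdensity}: if $g\in(s+H)\cap(s'+H)$ with $s\ne s'$, then $s-s'\in(S-S)\cap(H-H)=\{0\}$, a contradiction. We may assume $\mu(H)>0$, since otherwise the asserted bound $\rho\,\mu(H)=0$ is trivial; each disjoint piece $s+H$ then has the same positive measure $\mu(H)$.

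The geometric heart of the argument is the inequality
\[
\mu\bigl(A\cap(V+H)\bigr)\ \ge\ \#(S\cap V)\cdot\mu(H)\qquad(V\in\BB_0).
\]
Indeed, for each $s\in S\cap V$ we have both $s+H\subset A$ and $s+H\subset V+H$; these translates are pairwise disjoint, and they are finitely many because they all sit inside $V+H$, a set of finite measure ($\overline{V+H}\subset\overline{V}+H$ is compact), while each has positive measure $\mu(H)$. Hence their union is a genuinely measurable (finite) subset of $A\cap(V+H)$ of measure $\#(S\cap V)\,\mu(H)$, which gives the displayed bound; should $A$ fail to be Borel, one reads $\mu$ as the outer measure $\overline{\mu}$ (Remark \ref{c:easy}), which only strengthens the inequality.

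Now fix an arbitrary compact $C$ coming from the outer infimum in \eqref{CAdensity}, and test with $W:=V+H$. This $W$ again lies in $\BB_0$ (it is open, with $\overline{W}\subset\overline{V}+H$ compact), and the set $C+H$ is compact, so regrouping $C+W=(C+H)+V$ and combining with the displayed inequality gives
\[
\frac{\mu(A\cap W)}{\mu(C+W)}\ \ge\ \mu(H)\cdot\frac{\#(S\cap V)}{\mu((C+H)+V)}.
\]
The decisive point is that $\rho=\overline{D}^{\#}(S)$ is, by \eqref{CLdensity}, an infimum over all compact sets of the associated suprema, so for the particular compact set $C+H$ we have $\sup_{V\in\BB_0}\#(S\cap V)/\mu((C+H)+V)\ge\rho$. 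Thus for every $\tau<\rho$ some $V$ makes the right-hand side exceed $\tau\,\mu(H)$, whence $\sup_{W\in\BB_0}\mu(A\cap W)/\mu(C+W)\ge\rho\,\mu(H)$. Since $C$ was arbitrary, taking the infimum over $C$ yields $\overline{D}(\mu|_A)\ge\rho\,\mu(H)$, as claimed.

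The argument is short; the only delicate items are bookkeeping — checking $V+H\in\BB_0$ and $C+H\Subset G$, noting $S\cap V$ is finite so the disjoint-union measure is valid, and invoking the outer measure if $S+H$ is not Borel. The single conceptual step is the absorption $(C+V)+H=(C+H)+V$: it lets the enlargement ``by $H$'' of the test set in the numerator be paid for merely by passing from the compact set $C$ to the compact set $C+H$ in the denominator, and this is precisely what converts the hypothesis $\overline{D}^{\#}(S)=\rho$ into the desired lower bound on $\overline{D}(\mu|_A)$. I expect this absorption, together with verifying that it does not disturb membership in $\BB_0$, to be the main (though mild) obstacle.
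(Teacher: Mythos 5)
Your proof is correct and follows essentially the same route as the paper's: you test with sets of the form $V+H$, use the packing condition $(S-S)\cap(H-H)=\{0\}$ to get the disjoint-translates bound $\mu(A\cap(V+H))\ge\#(S\cap V)\,\mu(H)$, and absorb the enlargement via $(C+V)+H=(C+H)+V$, invoking the definition of $\overline{D}^{\#}(S)$ for the compact set $C+H$. Your extra bookkeeping (checking $V+H\in\BB_0$, finiteness of $S\cap V$, the outer-measure caveat, and disposing of the trivial case $\mu(H)=0$) is sound and, if anything, slightly more careful than the paper's write-up.
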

\begin{proof} Let $C\Subset G$ be arbitrary. We want to estimate from below the ratio $\mu(A\cap V)/\mu(C+V)$ for an appropriately chosen $V\in \BB_0$. Let us fix that we will take for $V$ some set of the form $U+H$ with $U\in \BB_0$. Clearly $A\cap V = (S+H)\cap (U+H) \supset (S\cap U)+H$. Now for any two elements $s\ne t \in S$, thus even more for $s,t \in (S\cap V)$, the sets $s+H$ and $t+H$ are disjoint, this being an easy consequence of the packing property because $s+q=t+r \Leftrightarrow s-t=q-r$, which is impossible for $s-t\ne 0$ by condition. Therefore by the properties of the Haar measure we get $\mu((S\cap U)+H)=\sum_{s\in(S\cap U)} \mu(s+H)= \#(S\cap U) \cdot \mu(H)$. In all, we found $\mu(A\cap V)\geq \#(S\cap U) \cdot \mu(H)$.

It remains to choose $V$, that is, $U$, appropriately. For the
compact set $C+H\Subset G$ and for any given small $\ve>0$, by
definition of $\overline{D} (\#|_S;\mu)=\rho$ there exists some
$U\in \BB_0$ such that $\#(S\cap U) >(\rho-\ve) \mu((C+H)+U)$.
Choosing this particular $U$ and combining the two inequalities we
are led to $\mu(A\cap V)\geq (\rho-\ve) \mu(C+H+U) \mu(Q)$, that
is, for $V:=U+H$ written in $\mu(A\cap V)/\mu(C+V)\geq (\rho-\ve)
\mu(H)$.

As we find such a $V$ for every positive $\ve$, the sup over
$V\in\BB_0$ is at least $\rho\mu(H)$, and because $C\Subset G$
was arbitrary, we infer the assertion.
\end{proof}

\begin{lemma}\label{l:ifclearthengood} Suppose that $S-S\cap
H-H=\{0\}$ with $\rho:=\overline{D}^{\#}(S)=\overline{D} (\gamma|_S) \in (0,\infty)$ and
$H\Subset G$ with $0<\mu(H)$. Then there exists a finite set
$B=\{b_1,\dots,b_k\}\subset G$ of at most $k\leq
[1/(\rho\mu(H))]$ elements so that $B+(H-H)+(S-S)=G$. In
particular, the set $S-S$ is syndetic with the compact set of
translates $B+(H-H)$.
\end{lemma}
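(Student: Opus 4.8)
The plan is to reduce this statement to the already-established Theorem \ref{th:general-hegyvari} (equivalently Corollary \ref{cor:genFurst}) by \emph{fattening} the sparse set $S$ into a set $A:=S+H$ that carries positive \emph{Haar}-density, and then translating the covering obtained for $A-A$ back into a covering by $S-S$ together with the compact ``error term'' $H-H$. The heart of the argument is Lemma \ref{l:fattening}, which has already done the real work of converting cardinality density into Haar density; what remains is essentially bookkeeping.

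First I would observe that the hypothesis $\mu(H)>0$ already forces $\mu(H-H)>0$, since $H-H$ contains the translate $H-h_0$ of $H$ for any fixed $h_0\in H$, whence $\mu(H-H)\ge\mu(H)>0$. Thus the hypotheses of Lemma \ref{l:fattening} are met, and that lemma yields, for $A:=S+H$,
\[
\overline{D}(\mu|_A)\ \ge\ \rho\,\mu(H)\ >\ 0.
\]
By Proposition \ref{prop:densitycompari} we then have $\overline{\Delta}(A)=\overline{\Delta}(\mu|_A)\ge\overline{D}(\mu|_A)\ge\rho\,\mu(H)>0$, so Theorem \ref{th:general-hegyvari} applies to $A$: there is a finite set $B=\{b_1,\dots,b_k\}\subset G$ with
\[
A-A+B=G,\qquad k=\#B\ \le\ \bigl[1/\overline{\Delta}(A)\bigr]\ \le\ \bigl[1/(\rho\,\mu(H))\bigr],
\]
the last inequality holding by monotonicity of the floor function together with $\overline{\Delta}(A)\ge\rho\,\mu(H)$.

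It then remains to rewrite this covering in terms of $S-S$. Since $A=S+H$, the elementary identity $A-A=(S+H)-(S+H)=(S-S)+(H-H)$ holds, so the conclusion $A-A+B=G$ becomes precisely $B+(H-H)+(S-S)=G$, as claimed, with the desired bound on $\#B$. Finally, set $K:=B+(H-H)$. The set $K$ is compact, for $B$ is finite (hence compact), $H-H$ is the continuous image of the compact set $H\times H$ under subtraction (hence compact), and the sum of two compact sets in a topological group is compact. Since $(S-S)+K=G$, the set $S-S$ is syndetic with compact translate-set $K=B+(H-H)$.

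I do not expect a serious obstacle here, precisely because Lemma \ref{l:fattening} already absorbs the one genuinely delicate point, namely passing from the \emph{cardinality} density $\overline{D}^{\#}(S)$ of the possibly very sparse set $S$ to an honest \emph{Haar} density of the fattened set $A=S+H$. The only points that demand care are (i) verifying that the weaker hypothesis $\mu(H)>0$ of the present lemma still suffices to invoke Lemma \ref{l:fattening}, whose stated hypothesis is $\mu(H-H)>0$, and (ii) propagating the numerical bound on $\#B$ correctly through the chain $\overline{\Delta}(A)\ge\overline{D}(\mu|_A)\ge\rho\,\mu(H)$ so as to land on $[1/(\rho\mu(H))]$ rather than a weaker estimate.
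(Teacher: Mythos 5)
Your proposal is correct and follows essentially the same route as the paper: fatten $S$ to $A:=S+H$ via Lemma \ref{l:fattening}, invoke Corollary \ref{cor:genFurst} (which you unwind through Theorem \ref{th:general-hegyvari} and Proposition \ref{prop:densitycompari}, exactly how that corollary is obtained), and translate back using $A-A=(S-S)+(H-H)$ with the compactness of $B+(H-H)$. Your explicit verification that $\mu(H)>0$ forces $\mu(H-H)\ge\mu(H)>0$, so that Lemma \ref{l:fattening} indeed applies, is a small detail the paper leaves implicit, and it is handled correctly.
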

\begin{proof} By the above Lemma \ref{l:fattening} we have an estimate on the asymptotic uniform upper density of $A:=S+H$ (i.e. $\mu|_A$). But then we may apply Corollary \ref{cor:genFurst} to see that the difference set $(S+H)-(S+H)$ is a syndetic set with the set of translates $B$ admitting $\#B\leq [1/\overline{D}(\mu|_A)]\leq [1/(\rho\mu(H))]$. Because also the set $H$ is compact, this yields that $S$ is syndetic as well, with set of translations being $B+(H-H)$. \end{proof}

One may think that it is not difficult, for a discrete set $S$ of
finite density with respect to counting measure, to find a compact
neighborhood $R$ of 0, so that $R\cap (S-S)$ be almost empty with
0 being its only element. If so, then by continuity of
subtraction, also for some compact neighborhood $H$ of zero with
$(H-H)\subset R$ (and, being a neighborhood, with $\mu(H)>0$, too)
we would have $(H-H)\cap(S-S)=\{0\}$, the packing type condition,
whence concluding the proof of Theorem \ref{thm:strongFurst}.

Unfortunately this idea turns to be naive. Consider the sequence
$S=\{n+1/n ~:~ n\in \NN\} \cup \NN$ (in $\RR$), which has asymptotic uniform upper density 2 with the cardinality measure, whilst
$S-S$ is accumulating at 0.

Nevertheless, this example is instructive. What we will find, is
that sets of \emph{finite} positive asymptotic uniform upper
density cannot have a too dense difference set: it always splits
into a fixed, bounded number of disjoint subsets so that the
difference set of each subset already leaves out a fixed compact
neighborhood of 0. This will be the substitute for the above naive
approach to finish our proof of Theorem \ref{thm:strongFurst}
through proving also some kind of subadditivity of the asymptotic uniform upper density -- another auxiliary statement interesting for its own right.

\begin{lemma}\label{l:partition} Let $H\in \BZ$ and let $S$ have positive but finite asymptotic uniform upper density with respect to cardinality measure, i.e. $\rho:=\overline{D}^{\#}(S) \in (0,\infty)$.

Then there exists a finite disjoint partition $S=\bigcup_{j=1}^n S_j$
of $S$ such that $(S_j-S_j)\cap (H-H) =\{0\}$.

Moreover, for any given $\ve>0$ choosing an appropriate $H\in \BZ$, depending on $\ve>0$, we can even guarantee
that the number $n$ of subsets in the partition is not more than
$(1+\ve)\rho\mu(H-H)$.
\end{lemma}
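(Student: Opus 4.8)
The plan is to recast the statement as a graph-colouring (equivalently, a packing-partition) problem and to control the number of colours through the finiteness of the density. Fix $H\in\BZ$ and introduce the \emph{conflict graph} $\Gamma$ on the vertex set $S$, joining $s\neq s'$ by an edge precisely when $s-s'\in H-H$; this relation is symmetric since $H-H=-(H-H)$. A subset $T\subset S$ is independent in $\Gamma$ exactly when $(T-T)\cap(H-H)=\{0\}$, i.e.\ when it satisfies the packing-type condition. Thus a partition $S=\bigcup_{j=1}^n S_j$ with $(S_j-S_j)\cap(H-H)=\{0\}$ is the same as a proper colouring of $\Gamma$ with $n$ colours, and it suffices to bound the chromatic number. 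Since $\Gamma$ will be seen to have finite maximal degree $\Delta$, a greedy (transfinite) colouring, or a reduction to finite subgraphs via De Bruijn--Erd\H os, shows $\chi(\Gamma)\le\Delta+1$, so we may take $n\le\Delta+1$.

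Next I would bound the degrees using $\rho=\overline{D}^{\#}(S)<\infty$. The degree of a vertex $s$ equals $\#\bigl(S\cap(s+(H-H))\bigr)-1$, the subtraction removing $s$ itself (note $0\in H-H$). Because $H-H$ is open with compact closure, $x+(H-H)\in\BZ$ for every $x$, and $C_0+(x+(H-H))$ is a union of open sets, hence open and measurable; so the definition \eqref{CLdensity} applies without the measurability caveat of Theorem \ref{th:Rudinlemma}. As the infimum defining $\rho$ is finite, for any fixed $\ve'>0$ there is a compact $C_0\Subset G$, which we may enlarge so that $0\in C_0$, with $\#(S\cap V)\le(\rho+\ve')\,\mu(C_0+V)$ for every $V\in\BZ$. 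Applying this with $V=x+(H-H)$ and using translation invariance yields the uniform bound
\begin{equation*}
\Delta+1=\sup_{x\in G}\#\bigl(S\cap(x+(H-H))\bigr)\le(\rho+\ve')\,\mu\bigl(C_0+(H-H)\bigr)<\infty .
\end{equation*}
In particular $\gamma_S$ is uniformly locally bounded, the chromatic number is finite, and a finite partition exists for an \emph{arbitrary} prescribed $H\in\BZ$; this settles the first assertion.

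For the quantitative ``moreover'' part I am free to choose $H$. The display already gives $n\le\Delta+1\le(\rho+\ve')\,\mu(C_0+(H-H))$, so it remains to make the distortion factor $\mu(C_0+(H-H))/\mu(H-H)$ as close to $1$ as desired: if I can arrange $\mu(C_0+(H-H))\le(1+\ve'')\mu(H-H)$ with $(\rho+\ve')(1+\ve'')\le(1+\ve)\rho$, then $n\le(1+\ve)\rho\,\mu(H-H)$ follows immediately. Equivalently, I must produce $H\in\BZ$ whose \emph{difference set} $H-H$ is Rudin-large with respect to $C_0$, i.e.\ nearly invariant under translation by $C_0$.

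This is the main obstacle. Theorem \ref{th:Rudinlemma} provides, for $C_0$ and any tolerance, some $V\in\BZ$ with $\mu(V+C_0)<(1+\ve'')\mu(V)$, but it does not deliver such a set in the special form $V=H-H$, and Rudin-largeness of $H$ alone does not pass to $H-H$: writing $C_0+H=H\cup R$ with $\mu(R)$ small, one has $C_0+(H-H)=(H-H)\cup(R-H)$, and the ``boundary layer'' $R-H$ can inflate substantially. I would resolve this by establishing the existence of Rudin-large difference sets directly: either by applying Theorem \ref{th:Rudinlemma} to a suitably enlarged compact set and iterating, or, most transparently, by invoking amenability of $G$ and drawing $H$ from a F\o lner family for which $H-H$ is again F\o lner. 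Concretely, via the structure theorem one may take $H$ of product ``box'' shape $I^a\times U$, with $I$ a long interval in the $\RR^a$-part and $U$ a large union of cosets of a compact open subgroup; then $H-H$ has the same box shape and $\mu(C_0+(H-H))/\mu(H-H)\to1$ as the box grows, so a large enough box furnishes the required $H$ and completes the proof.
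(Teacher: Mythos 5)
Your proof is correct and follows essentially the same route as the paper's: the identical conflict graph with edges $s-s'\in H-H$, the same degree bound $\#\bigl(S\cap(x+(H-H))\bigr)\le(\rho+\ve')\,\mu\bigl(C_0+(H-H)\bigr)$ coming from a compact $C_0$ nearly attaining the infimum in \eqref{CLdensity}, a greedy colouring into at most $\Delta+1$ classes (the paper runs the greedy argument componentwise on the countable components, you cite De Bruijn--Erd\H{o}s; these are interchangeable), and, for the quantitative part, a difference-set variant of Theorem \ref{th:Rudinlemma} giving $\mu\bigl(C_0+(W-W)\bigr)<(1+\eta)\mu(W-W)$. The only divergence is that the paper merely asserts this variant ``can be extracted from the proofs cited above,'' whereas you correctly flag that near-invariance of $H$ does not automatically transfer to $H-H$ and sketch the structure-theorem construction that repairs it --- just note that in your box construction ``large'' alone does not suffice: $U$ must be chosen as a F\o lner-type set (e.g.\ a box in the finitely generated subgroup of $G_0/K$ containing the cosets that cover the projection of $C_0$), since for an arbitrary large union of cosets the ratio $\mu\bigl(C_0+(H-H)\bigr)/\mu(H-H)$ need not approach $1$.
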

\begin{proof} Let us start with choosing a compact set $C\subset G$ such that $\sup_{V\in\BB_0}\#(S\cap V)/\mu(C+V) <\rho+\ve/2$. By definition of $\overline{D}^{\#}(S)$ such $C$ exists for all $\ve>0$.

Let $s\in S$ be arbitrary, and put $Q:=H-H$, and consider $R:=s+Q$ for an arbitrary, but fixed $s\in S$. Let let us try to estimate the number of other elements of $S$ falling in $R$. Clearly $R\in \BB_0$, so we have $\#(S\cap R)/\mu(C+R)\leq \sup_{V\in\BB_0}\#(S\cap V)/\mu(C+V) <\rho+\ve/2$. That is, we already have a bound $k:=\#(S\cap R)\leq (\rho+\ve/2) \mu(C+R)$ with the given $C=C(\ve)$, independently of $R$, i.e. of $H$.

Next we show how to obtain the bound $k:=\#(S\cap R)\leq (\rho+\ve)\mu(Q)$ for some appropriate choice of $H$. This hinges upon a variant of the above mentioned Theorem \ref{th:Rudinlemma}, but now in the form that for $V$ we can even take an open set of the form $W-W$, where $W\in \BZ$ is also an open set of compact closure. That variant can be extracted from the proofs cited above. So, for the given compact set $C\Subset G$ and for any given (small) $\eta>0$ there exists $V=W-W$, $W\in\BZ$, so that $\mu(C+V)<(1+\eta)\mu(V)$.

In all, working with the above chosen compact set $C=C(\ve/2)$ belonging to $\ve/2$ and \emph{with an appropriate choice of $H:=W$}, coming from $V=W-W$ and $W$ belonging to $C$ and $\eta$, we even have $k:=\#(S\cap R)\leq (\rho+\ve/2) \mu(C+R)< (\rho+\ve/2)(1+\eta) \mu(Q)$. Note that here the dependence on $C$ disappears from the end formula, but there is a dependence of $H$ on $\eta$ and $C$ through the choice of $V$.

It remains to construct the partition once we have a compact
neighborhood $H$ of 0 and a finite number $k\in \NN$ such that
$\#(S\cap(s+H-H)\leq k$ for any $s\in S$. More precisely, we will construct a disjoint partition with $n\leq k$ parts, so the above estimate of $k$ will also imply the asserted bound $n\leq (1+\ve)\rho\mu(H-H)$.

This is a standard argument. Consider a graph on the points of $S$ defined by connecting two points $s$ and $t$ exactly when $t\in s+H-H$. By virtue of the symmetry of $Q:=H-H$ this happens exactly when $s \in t+ H-H$, so the above definition defines indeed a graph, not just a directed graph, on the points of $S$. In this graph by condition the degree of any point of $s\in S$ is at most $k-1$, as there are at most $k-1$ further points $t$ of $S$ in $s+H-H$. But it is well-known that such a graph can be partitioned into $k$ subgraphs with no edges within any of the induced subgraphs.\footnote{The proof of this is very easy for finite or countable graphs: just start to put the points, one by one, inductively into $k$ preassigned sets $S_j$ so that each point is put in a set where no neighbor of it stays; since each point has less than $k$ neighbors, this simple greedy algorithm can not be blocked and the points all find a place. For larger graphs the same argument works in each connected, (hence countable) component.} That is, the set of points $S$ split into the disjoint union of some $S_j$ with no two points $s,t\in S_j$ being in the relation $t\in s+H-H$, defining an edge between them.

It is easy to see that with this we have constructed the required partition: the $S_j$ are disjoint, and so are $(S_j-S_j)$ and $H-H\setminus\{0\}$, for any $j=1,\dots,k$, because $s-t=h-h'$ implies $t=s+h'-h\in s+H-H$, excluded by construction. This concludes the proof.
\end{proof}

\begin{lemma}[\bf subadditivity]\label{l:subadditivity}
Let $\nu_0=\sum_{j=1}^n \nu_j$ be a sum of measures, all on the common set algebra $\SA$ of measurable sets. Then we have $\overline{D}(\nu_0) \leq \sum_{j=1}^n \overline{D}(\nu_j)$.

In particular, this holds for one given measure $\nu$ and a disjoint union of sets $A_0=\cup_{j=1}^n A_j$, with $\nu_j:=\nu|_{A_j}$, for $j=0,1,\dots,k$. If $\nu=\mu$, this gives $\overline{D}(\cup_{j=1}^{n}A_j) \leq \sum_{j=1}^n \overline{D}(A_j)$.
\end{lemma}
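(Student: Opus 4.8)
The plan is to exploit the one structural feature of the definition \eqref{Cnudensity} that makes the infimum over compacta cooperate with sums, namely that the inner fraction is \emph{monotone} in $C$: enlarging $C$ enlarges $C+V$ and hence only increases the denominator $\mu(C+V)$. This is exactly what lets us replace the (a priori different) near-optimal compact sets attached to the individual summands $\nu_j$ by one common set. First I would dispose of the trivial case: if some $\overline{D}(\nu_j)=+\infty$, the asserted inequality is vacuous, so we may assume every $\rho_j:=\overline{D}(\nu_j)$ is finite.

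Fix $\ve>0$. By the definition of $\overline{D}(\nu_j)$ as an infimum over compact sets, for each $j$ there is a nonempty $C_j\Subset G$ with $\sup_{V\in\SA\cap\BB_0}\nu_j(V)/\mu(C_j+V)<\rho_j+\ve$, i.e.
\[
\nu_j(V)<(\rho_j+\ve)\,\mu(C_j+V)\qquad\text{for every }V\in\SA\cap\BB_0.
\]
Now set $C:=\bigcup_{j=1}^n C_j$, which is again compact, being a finite union of compacta, and nonempty. Since $C_j\subseteq C$ we have $C_j+V\subseteq C+V$, and therefore $\mu(C_j+V)\le\mu(C+V)$ for every open $V$ (note $C+V$ is open, hence Borel measurable, and $\mu(C+V)\ge\mu(V)>0$). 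Summing the displayed estimates over $j$ and using $\nu_0=\sum_j\nu_j$ gives, for every $V\in\SA\cap\BB_0$,
\[
\nu_0(V)=\sum_{j=1}^n\nu_j(V)<\sum_{j=1}^n(\rho_j+\ve)\,\mu(C_j+V)\le\Big(\sum_{j=1}^n\rho_j+n\ve\Big)\mu(C+V).
\]

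Dividing by $\mu(C+V)>0$ and taking the supremum over $V\in\SA\cap\BB_0$ yields $\sup_V \nu_0(V)/\mu(C+V)\le\sum_j\rho_j+n\ve$; then taking the infimum over compacta, for which our particular $C$ is one admissible choice, gives $\overline{D}(\nu_0)\le\sum_j\rho_j+n\ve$. As $\ve>0$ was arbitrary, $\overline{D}(\nu_0)\le\sum_{j=1}^n\overline{D}(\nu_j)$. For the second assertion I would simply observe that when the $A_j$ are pairwise disjoint one has $\mu|_{A_0}=\sum_j\mu|_{A_j}$ by additivity of $\mu$, so the set statement is a special case of the measure statement. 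I do not expect a serious obstacle here; the only points needing care are the \emph{direction} of the monotonicity — one must enlarge, never shrink, $C$, so that the single set $C$ dominates all the denominators $\mu(C_j+V)$ simultaneously — and the harmless $n\ve$ bookkeeping, both of which are routine.
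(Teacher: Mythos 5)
Your proof is correct and follows essentially the same route as the paper: choose near-optimal compacta $C_j$ for each $\nu_j$, merge them into a single compact set $C$ whose denominator $\mu(C+V)$ dominates every $\mu(C_j+V)$, sum the estimates, and let $\ve\to 0$. The only (harmless) deviations are that the paper merges via the Minkowski sum $C:=C_1+\dots+C_n$, where $\mu(C_j+V)\le\mu(C+V)$ requires a one-line translation-invariance argument, whereas your union $C:=C_1\cup\dots\cup C_n$ yields the inclusion $C_j+V\subseteq C+V$ outright, and that you bound $\sup_V \nu_0(V)/\mu(C+V)$ directly instead of picking a near-optimal $V$ for $\nu_0$; your explicit disposal of the case of an infinite $\overline{D}(\nu_j)$ is in fact slightly more careful than the paper's appeal to monotonicity.
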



\begin{proof} Let us write $\rho_j:=\OD(\nu_j)$, $j=0,1,\ldots,n$. A.u.u.d. is clearly monotone in the measures considered, therefore all $\nu_j$ have an a.u.u.d $0\leq \rho_j\leq \rho<\infty$.

Let $\ve>0$ be arbitrary, and take $C_j\Subset G$ so that for all $V\in \BB_0$ in the definition of $\overline{D}(\nu_j)$ we have $\nu_j(V)\leq (\rho_j+\ve)\mu(C_j+V)$ for $j=1,\dots,n$. Such $C_j$ exist in view of the infimum on $C\Subset G$ in the definition \eqref{Cnudensity} of a.u.u.d.

Consider the (still) compact set $C:=C_1+\dots+C_n$. By definition
of a.u.u.d. there is $V\in\BB_0$ such that $\nu_0(V)\geq (\rho_0-\ve)
\mu(C+V)$.
Obviously, $\mu(C_j+V)\leq \mu(C+V)$, so on combining the above we obtain
$$
\rho_0  -\ve \leq \frac{\nu_0(V)}{\mu(C+V)} = \frac{\sum_{j=1}^k
\nu_j(V)}{\mu(C+V)}  \leq \sum_{j=1}^k \frac{\nu_j(V)}{\mu(C_j+V)}
\leq \sum_{j=1}^k (\rho_j+\ve),
$$
that is, $\rho_0-\ve\leq \sum_j (\rho_j+\ve)$. This holding for all $\ve$,
we find $\rho_0\leq \sum_j \rho_j$, as it was to be proved.
\end{proof}

{\it Continuation of the proof of Theorem \ref{thm:strongFurst}}.
We take now an \emph{arbitrary} compact neighborhood $H\Subset G$
of $0$, with of course $\mu(H)>0$. By Lemma \ref{l:partition}
there exists a finite disjoint partition $S=\cup_{j=1}^n S_j$ with
$(S_j-S_j)\cap(H-H)=\{0\}$. By subadditivity of a.u.u.d. (that is,
Lemma \ref{l:subadditivity} above), at least one of these $S_j$
must have positive a.u.u.d. $\rho_j$ (with respect to the counting
measure), namely of density $0< \rho/n \leq \rho_j \leq \rho <
\infty$, with $\rho:=\overline{D}^{\#}(S)$.

Selecting such an $S_j$, we can apply Lemma
\ref{l:ifclearthengood} to infer that already $S_j$ -- hence also
$S\supset S_j$ -- is syndetic.
\end{proof}

\begin{remark}\label{rem:Ksize} In fact, the above proof also provides an estimate on the measure of the compact translation set $B+H-H$ exhibiting the syndetic property of $S-S$, or, more precisely, already of some $S_j-S_j$, if we select $H$ suitably. Namely, $\# B\leq 1/\rho_j\mu(H)$, where $\rho_j\geq \rho/n$ and $n\leq (1+\ve)\rho \mu(H-H)$ yield $\mu(B+H-H)\leq (n/\rho) \mu(H) \leq (1+\ve)\mu(H-H)/\mu(H)$.
\end{remark}


\medskip

\noindent {\sc\small
Alfr\' ed R\' enyi Institute of Mathematics, \\
1053 Budapest, Hungary}\\
E-mail: {\tt revesz@renyi.hu}

\end{document}